\newtheorem{satz}{Theorem}
\newtheorem{proposition}[satz]{Proposition}
\newtheorem{theorem}[satz]{Theorem}
\newtheorem{lemma}[satz]{Lemma}
\newtheorem{conjecture}[satz]{Conjecture}
\newtheorem{remark}[satz]{Remark}
\def\Z{\mathbb {Z}}
\def\F{\mathbb {F}}
\def\a{\alpha}
\def\C{\mathbb{C}}
\def\d{\delta}
\def\o{\omega}
\def\({\big (}
\def\){\big )}
\def\G{\Gamma}
\def\ls{\leqslant}
\def\gs{\geqslant}
\def\dim{{\rm dim}}
\def\le{\leqslant}
\def\ge{\geqslant}
\def\_phi{\varphi}
\def\eps{\varepsilon}
\def\Gr{{\mathbf G}}
\def\FF{\widehat}
\def\ov{\overline}
\def\Spec{{\rm Spec\,}}
\def\la{\lambda}
\def\Bohr{{\rm Bohr}}
\begin{document}

\title{Number of $A+B\ne C$ solutions in abelian groups and application to counting independent sets in hypergraphs\footnote{This work was supported by the Russian Federation Government (Grant number 075-15-2019-1926).}}
\author{Aliaksei Semchankau\footnote{Lomonosov Moscow State University, Faculty of Mechanics and Mathematics, Department of Dynamical Systems Theory, Leninskie Gory, 1, Moscow, Russia; Steklov Mathematical Institute of Russian Academy of Sciences, Gubkina 8, Moscow, Russia. E-mail:aliaksei.Semchankau@gmail.com}, Dmitry Shabanov\footnote{Moscow Institute of Physics and Technology, Laboratory of Combinatorial and Geometric Structures, Institutskiy per. 9, Dolgoprudny, Moscow Region, Russia; HSE University, Faculty of Computer Science, Myasnitskaya Str. 20, Moscow, Russia. E-mail:dmitry.shabanov@phystech.edu}, Ilya Shkredov\footnote{Steklov Mathematical Institute of Russian Academy of Sciences, Gubkina 8, Moscow, Russia; IITP RAS,
Bolshoy Karetny per. 19, Moscow, Russia; Moscow Institute of Physics and Technology, Institutskiy per. 9, Dolgoprudny, Moscow Region, Russia. E-mail:ilya.shkredov@gmail.com}}

\date{}

\maketitle

\centerline{\bf\large Abstract}

\vspace{4mm}
The paper deals with a problem of Additive Combinatorics. Let $\Gr$ be a finite abelian group of order $N$. We prove that the number of subset triples $A,B,C\subset \Gr$ such that for any $x\in A$, $y\in B$ and $z\in C$ one has  $x+y\ne z$
 equals
$$
  3\cdot 4^N+N3^{N+1} + O((3-c_*)^N)
$$
for some absolute constant $c_*>0$. This provides a tight estimate for the number of independent sets in a special 3-uniform linear hypergraph and gives a support for the natural conjecture concerning the maximal possible number of independent sets in such hypergraphs on $n$ vertices.

\section{Introduction}
The paper deals with estimating the number of special subset triples in abelian groups. Let us
start with some motivation of the main result and
discuss the problem concerning counting the number of independent sets in hypergraphs. Recall that if $H=(V,E)$ is a  hypergraph, then its vertex subset $W\subset V$ is \emph{independent} if it does not contain complete edges from $E$, i.e. for any $A\in E$, $A\nsubseteq W$. Let $i(H)$ denote the total number of independent sets in $H$.

\subsection{Related work: counting independent sets in hypergraphs}
The classical problem of extremal graph theory asks what is the maximal number of in\-de\-pen\-dent sets in a $d$-regular graph $G$ on $n$ vertices. In 1991 Alon \cite[Section 5]{Alon} conjectured that if $n$ is divisible by $2d$ then the number of independent sets in this class of graphs is maximized when $G$ is a union of $n/(2d)$ disjoint copies of $K_{d,d}$, the complete bipartite graph with equal size part $d$. The conjecture was proved by Kahn \cite{Kahn} for bipartite graphs and, finally, by Zhao \cite{Zhao} for all $d$-regular graphs on $n$ vertices. So, if $G$ is a $d$-regular graph on $n$ vertices, then
$$
  i(G)\ls \left(i(K_{d,d})\right)^{\frac{n}{2d}}=\left(2^{d+1}-1\right)^{\frac{n}{2d}}.
$$
Note that the inequality holds even when $n$ is not divisible by $2d$.

\bigskip
It is quite natural to consider the same problem for hypergraphs, especially for the class of linear hypergraphs. Recall that a hypergraph $H=(V,E)$ is said to be \emph{linear} if every two of its distinct edges do not share more than one common vertex, i.e. for any $A,B\in E$, $A\ne B$, we have
$$
  |A\cap B|\ls 1.
$$
In
recent paper \cite{CPST} Cohen, Perkins, Sarantis and Tetali posed the following general question: \emph{which $d$-regular, $k$-uniform, linear hypergraph on a given number of vertices has the most number of independent sets}?

Suppose that $H=(V,E)$ is a $d$-regular $k$-uniform linear hypergraph on $n$ vertices. The general container method (see, e.g., \cite{SaxThom}) implies the following upper bound on the number of independent sets:
\begin{equation}\label{sax_thom}
  \frac{\log_2 i(H)}n\leqslant \frac {k-1}{k}+O_k\left(\frac{\log_2^{2(k-1)/k} d}{d^{1/k}}\right).
\end{equation}
It is known that the first order term in the exponent in \eqref{sax_thom} is correct, however
the second is expected to be improved.
Authors of \cite{CPST} formulated the following conjecture:
\begin{conjecture}\label{conj_cohen}For any $d$-regular $k$-uniform linear hypergraph $H$ on $n$ vertices the following holds 
  \begin{equation}\label{conjecture}
  \frac{\log_2 i(H)}n\leqslant \frac {k-1}{k}+\frac{\log_2 k}{kd}.
\end{equation}
\end{conjecture}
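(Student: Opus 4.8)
\section{Proof proposal}

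The plan is first to separate the two orders of magnitude present in~\eqref{conjecture}: the leading term $\tfrac{k-1}{k}$ already follows from the container estimate~\eqref{sax_thom}, so the whole content of the conjecture is to sharpen the error term there from $O_k\big(\log_2^{2(k-1)/k} d \,/\, d^{1/k}\big)$ down to $\tfrac{\log_2 k}{kd}$. I would attack this by an entropy (Shearer) argument in the spirit of Kahn's proof of Alon's conjecture for bipartite graphs~\cite{Kahn}. Let $I$ be a uniformly random independent set of $H$, viewed as a point of $\{0,1\}^{V}$, so that $\log_2 i(H)=\mathbb{H}(I)$ for the binary entropy $\mathbb{H}$. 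Since $H$ is $d$-regular and $k$-uniform, $|E|=nd/k$ and assigning weight $1/d$ to each edge is a fractional cover of $V$ (each vertex covered with total weight exactly $1$); Shearer's inequality then yields
\begin{equation*}
  \mathbb{H}(I)\;\le\;\frac1d\sum_{e\in E}\mathbb{H}(I_{e}),\qquad I_e:=(I_v)_{v\in e}.
\end{equation*}
The trivial bound $\mathbb{H}(I_e)\le\log_2(2^k-1)$ is far too weak (it only recovers $\tfrac{\log_2(2^k-1)}{k}$, which is strictly larger than $\tfrac{k-1}{k}$ for $k\ge2$), so the point is to gain from the overlaps of the edges. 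Linearity of $H$ is exactly what forces the link of a vertex $v$ to be a disjoint union of $d$ sets of size $k-1$; the idea is to telescope $\mathbb{H}(I)$ over a well-chosen (random) ordering of $V$ and to bound each increment $\mathbb{H}(I_v\mid\text{earlier states})$ in terms of the state of the link of $v$, as in the local/occupancy method for the hard-core model.

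The heart of the matter is then an exact count of the independent sets of the locally-extremal gadget. For $k=3$ the natural candidate is the Cayley-type hypergraph on $\Gr\times\{1,2,3\}$, three copies of an abelian group $\Gr$ of order $d$, with an edge $\{(x,1),(y,2),(z,3)\}$ whenever $x+y=z$: it is $3$-uniform, $d$-regular and linear, and its independent sets are precisely the triples $(A,B,C)$ of subsets of $\Gr$ with $x+y\ne z$ for all $x\in A$, $y\in B$, $z\in C$. The main theorem of this paper evaluates this count as $3\cdot 4^{d}+d3^{d+1}+O((3-c_*)^d)$, so that with $n=3d$ and $k=3$,
\begin{equation*}
  \frac{\log_2 i(H)}{n}\;=\;\frac23+\frac{\log_2 3}{3d}+\Theta\big((3/4)^d\big).
\end{equation*}
Thus this example matches the conjectured bound up to the negligible term $\Theta((3/4)^d)$ and in fact slightly \emph{exceeds} it, which already shows that~\eqref{conjecture} cannot hold as a literal inequality and must be read with a $2^{-\Omega(d)}$ slack in the exponent. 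A realistic target is therefore to prove, for every $d$-regular $k$-uniform linear $H$,
\begin{equation*}
  \frac{\log_2 i(H)}{n}\;\le\;\frac{k-1}{k}+\frac{\log_2 k}{kd}+2^{-\Omega(d)},
\end{equation*}
with equality, up to this slack, attained by the Cayley gadget. For the upper bound I would run the container method to trap almost every independent set inside one of a few containers, use a stability argument to show that each container is close to a disjoint union of Cayley-gadget-like pieces, and then feed the exact count above into the estimate for each container.

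For general $k$ one needs, first, the right locally-extremal gadget --- presumably an affine/Cayley-type $k$-partite linear hypergraph, say $k$ copies of a group of order $d$ with edges indexed by the solutions of a single linear equation $x_1+\cdots+x_k=0$ --- and, second, the $k$-uniform analogue of the main theorem: an exact count of these ``higher sum-free'' configurations via Fourier analysis together with a spectral-gap/structural argument. I expect the main obstacle to be precisely this last point, compounded by the fact that no optimal gadget is even conjectured cleanly for all pairs $(k,d)$ (for $d=1$ the disjoint union of edges is extremal, whereas for large $d$ the Cayley-type gadget takes over, so a uniform argument must interpolate between these regimes), and by the fact that the entropy and container machinery, as currently available, is not delicate enough to reach the $\tfrac{\log_2 k}{kd}$ rate rather than the cruder bound in~\eqref{sax_thom}. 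In short, the plan reduces the conjecture to (i) an exact-counting problem on a single extremal gadget, solved here for $k=3$, and (ii) a container-plus-stability theorem sharp enough to transfer that count to all hypergraphs; step (ii), together with the higher-$k$ version of (i), is where the real difficulty lies.
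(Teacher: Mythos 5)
You were asked to prove Conjecture~\ref{conj_cohen}, which is quoted verbatim from \cite{CPST}: the paper does not prove it, and in fact its Theorem~\ref{theorem_ind_sets} shows that the literal inequality is \emph{false}. For $H$ a disjoint union of $n/(3d)$ copies of $H^{mod}_d$ the refined count $i(H^{mod}_d)=3\cdot 4^d+d3^{d+1}+O((3-c_*)^d)$ gives $\frac{\log_2 i(H)}{n}\ge \frac23+\frac{\log_2 3}{3d}+\Omega\bigl(3^d/(d4^d)\bigr)$, strictly exceeding the conjectured right-hand side. You noticed this yourself, and your conclusion that the bound can only hold with a $2^{-\Omega(d)}$ slack agrees with the paper's own remark; but that observation is a disproof of the statement as given, not a proof of it.

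Beyond that, what you have written is a research program rather than an argument, and every substantive step is deferred. The Shearer step you actually carry out yields only $\log_2(2^k-1)/k$, which, as you note, does not even recover the first-order term $\frac{k-1}{k}$; the local/occupancy bound on the conditional entropies, the container-plus-stability reduction to Cayley-type gadgets, and the exact count of independent sets in the $k$-uniform gadget for $k>3$ are all stated as goals with no proofs. The one ingredient you can point to concretely is the paper's Theorem~\ref{theorem_ind_sets} for $k=3$, which evaluates $i(H^{mod}_d)$ for a single candidate extremal example but says nothing about arbitrary $d$-regular $k$-uniform linear hypergraphs. So there is no proof here: the statement remains open (and, read literally, is refuted by the paper itself), and a correct write-up should at most present your plan as motivation for a corrected conjecture with an explicit exponentially small error term.
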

As in Alon's conjecture for graphs,
authors of \cite{CPST} provide some example of a small hypergraph, whose appropriate number of copies will give  bound \eqref{conjecture}. In the current paper we concentrate on the case of 3-uniform hypergraphs, so we will describe the construction from \cite{CPST} only for $k=3$.

Consider the following \emph{mod} hypergraph $H^{mod}_d=(V,E)$, where $V=V_1\sqcup V_2\sqcup V_3$, every $V_i$ is equal to the cyclic group $\mathbb{Z}/d\mathbb{Z}$ and
$$
  E=\left\{(x,y,z):\;x\in V_1,\;y\in V_2,\;z\in V_3,\;x+y=z \pmod d \right\}.
$$
Clearly, this hypergraph is 3-uniform, 3-partite, $d$-regular and linear (for each $x\in V_1$, $y\in V_2$ there is a unique $z\in V_3$
such that
$(x,y,z)\in E$). It was noticed in \cite{CPST} that
$$
  i(H^{mod}_d)\gs 3\cdot 4^{d}-3\cdot 2^d+1.
$$
This lower bound estimates only the independent sets that do not touch some of the parts $V_1$, $V_2$, $V_3$. But can we say that this bound is almost tight? One the main results of our paper answers this question positively.

\begin{theorem}\label{theorem_ind_sets}
We have
\begin{equation}\label{f:theorem_ind_sets_1}
  i(H^{mod}_d) =  3\cdot 4^d+O((4-c_*)^d)
\end{equation}
and moreover
\begin{equation}\label{f:theorem_ind_sets_2}
 i(H^{mod}_d) =  3\cdot 4^d+ d3^{d+1} + O((3-c_*)^d)
\end{equation}
for some absolute constant $c_*>0$.
\end{theorem}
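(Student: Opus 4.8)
The plan is to reduce the hypergraph counting problem to the additive-combinatorial statement announced in the abstract, and then to extract the two successive approximations in \eqref{f:theorem_ind_sets_1} and \eqref{f:theorem_ind_sets_2} by a careful analysis of which triples $(A,B,C)$ dominate the count. First I would observe that an independent set $W$ in $H^{mod}_d$ is precisely a triple $(A,B,C)$ with $A\sbeq V_1$, $B\sbeq V_2$, $C\sbeq V_3$ such that no edge $(x,y,z)$ with $x+y=z$ lies entirely in $W$; identifying each $V_i$ with $\Gr=\Z/d\Z$, this is exactly the condition that there are no $x\in A$, $y\in B$, $z\in C$ with $x+y=z$. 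Hence $i(H^{mod}_d)$ equals the number of $A+B\ne C$ triples in $\Gr$ with $|\Gr|=N=d$, and the main theorem from the abstract gives $i(H^{mod}_d)=3\cdot 4^d+d3^{d+1}+O((3-c_*)^d)$, which is \eqref{f:theorem_ind_sets_2}; then \eqref{f:theorem_ind_sets_1} follows a fortiori, since $d3^{d+1}+O((3-c_*)^d)=O((4-c_*')^d)$ for a possibly smaller constant $c_*'>0$. So strictly speaking the theorem is an immediate corollary of the group-theoretic result, and the real content is in proving that estimate.

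For the group-theoretic count the strategy I would follow is a stability/structure argument. The ``trivial'' families of solutions are those where one of the three sets is empty or one of them is all of $\Gr$: if $A=\emptyset$ then $B,C$ are arbitrary, contributing $\sim 4^N$, and symmetrically for $B=\emptyset$, $C=\emptyset$, giving the leading term $3\cdot 4^N$ after inclusion--exclusion to remove the overlaps (two of them empty). The next layer, producing the $N3^{N+1}$ term, should come from configurations where one set is a single coset-translate structure forcing the complementary condition cheaply — concretely, fixing $C=\Gr\setminus\{c\}$ (or $A$, $B$ a singleton's worth of constraints), which for each of the $3N$ choices of the distinguished element and the distinguished part yields roughly $3^N$ completions, the $3^N$ coming from the fact that each remaining element independently lands in one of three allowed ``states''. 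The bulk of the work is to show that \emph{every other} triple contributes only $O((3-c_*)^N)$ in aggregate: once $A,B,C$ are all nonempty and none is co-small, the equation $x+y\ne z$ becomes genuinely restrictive, and one wants an entropy or container-type bound showing the number of such triples is exponentially smaller than $3^N$.

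The hard part, and where I expect the real technical effort to lie, is precisely this last step: obtaining the uniform exponential savings $(3-c_*)^N$ over \emph{all} abelian groups $\Gr$ of order $N$, not just $\Z/d\Z$, and with a single absolute $c_*$. I would attack it by a dichotomy. If $A$, $B$, $C$ each have density bounded away from $0$ and $1$, then $A+B$ is large (by Kneser's theorem or a Cauchy--Davenport-type bound, controlling the stabilizer), so the constraint $C\cap(A+B)=\emptyset$ forces $C$ to be small, and counting such triples by first choosing $A,B$ (at most $4^N$ ways, but then $C$ ranges over subsets of a small complement) gives a bound like $\sum_{A,B}2^{|\Gr\setminus(A+B)|}$, which one shows is $O((3-c_*)^N)$ by exploiting that $|A+B|\ge (1+\d)\max(|A|,|B|)$ for a positive proportion of the $(A,B)$ pairs. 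In the complementary regime where some set is very small or very large, one peels off that set: a small $A$ of size $k$ imposes that $C$ avoids $A+B$, i.e. $C+(-A)$ misses $B$ in a structured way, and one induces or directly estimates, the point being that $\binom{N}{k}$ is subexponential while the gain from the constraint is a fixed exponential factor. Assembling these cases with explicit constants, and checking that the $O((3-c_*)^N)$ error genuinely absorbs all cross-terms and the second-order $N3^{N+1}$ layer's own error, is the bookkeeping-heavy core; the reduction to hypergraphs above is then a one-line consequence.
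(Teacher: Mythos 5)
Your reduction of $i(H^{mod}_d)$ to counting triples $(A,B,C)$ in $\Z/d\Z$ with no solution of $x+y=z$ is exactly how the paper obtains Theorem \ref{theorem_ind_sets}: the statement is an immediate corollary of Theorem \ref{t:A+B=C,N} and of the refined Theorem 16 applied to $\Gr=\Z/d\Z$, and your observation that \eqref{f:theorem_ind_sets_1} follows a fortiori from \eqref{f:theorem_ind_sets_2} is also how things fit together. One slip in your description of the second-order term: the configurations producing $N3^{N+1}$ are \emph{not} of the form $C=\Gr\setminus\{c\}$ (that would force $A+B\subseteq\{c\}$ and contributes only $O(2^N)$ triples); they are the triples in which one of the three sets is a singleton $\{x_0\}$, so that for each $w\in\Gr$ the pair of events $w-x_0\in B$, $w\in C$ excludes exactly one of four combinations, giving $3^N$ completions for each of the $3N$ choices of position and element. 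Your "three states per element" arithmetic is the right computation, but attached to the wrong family.

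The genuine gap is in the bulk estimate, i.e.\ precisely where you say the real work lies. For the dense regime your plan is to bound the count by $\sum_{A,B}\bigl(2^{|\Gr\setminus(A+B)|}-1\bigr)$ using only a Cauchy--Davenport/Kneser lower bound on $|A+B|$. This cannot give any exponential saving: the paper's own computation in the proof of Theorem \ref{t:A+B=C} shows that the cardinality information $|A+B|\ge|A|+|B|-1$ alone yields a bound of order $4^p$ (see \eqref{tmp:29.04_3} with the factor $(3/4)^q$ removed), so one does not even recover \eqref{f:theorem_ind_sets_1} this way, let alone the $(3-c_*)^N$ error in \eqref{f:theorem_ind_sets_2}. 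The indispensable extra input is structural, not metric: by almost-periodicity (Green's Theorem \ref{t:Green_AP} or Croot--\L{}aba--Sisask for the first asymptotic, and Proposition \ref{p:Semchankau_A+B} --- built from Pollard's inequality, Chang's theorem and a Bohr-set covering --- for the second), the complement of $A+B$ is, up to an exceptional set of size $O(\eps^2 p)$, contained in a union of boundedly many translates of a single low-dimensional Bohr set; hence the number of admissible "containers" for $C$ (and, after symmetrizing via Lemma \ref{l:struct}, for $A$ and $B$ as well) is only $\exp(O(\eps p))$, which is what converts $4^N$ into $2^{N+O(\eps N)}$ (and $2^{3N/2+O(\eps N)}$ in the general abelian case, where Kneser and Remark \ref{r:wa_wb_wc} enter). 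Likewise, to isolate the $N3^{N+1}$ term you must show that triples with $2\le\min(|A|,|B|,|C|)\ll\eps N$ contribute only $(3-c_*+O(\eps))^N$; your assertion that "the gain from the constraint is a fixed exponential factor" is exactly what requires proof, and the paper obtains it by normalizing $\{0,-1\}\subseteq B$, writing the count as $\sum_C 2^{p-|C\cup(C+1)|}$, and applying Chernoff's inequality to show that $|C\cup(C+1)|$ exceeds $|C|$ by a definite proportion of $p$ for all but exponentially few $C$. Without these two ingredients the proposal does not reach either asymptotic formula.
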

As a consequence, we immediately obtain that if $H$ is a union of $n/(3d)$ disjoint copies of $H^{mod}_d$, then in view of \eqref{f:theorem_ind_sets_1}
$$
  \frac{\log_2 i(H)}n\leqslant \frac 1{3d}\log_2\left(3\cdot 4^d+O((4-4\varepsilon)^d)\right)=\frac 23+\frac{\log_2 3}{3d}+O((1-\varepsilon)^d)
$$
for some positive $\varepsilon$. This estimate strongly supports Conjecture 1, see inequality \eqref{conjecture}.
Surprisingly, that on the other hand, from our more refined asymptotic formula \eqref{f:theorem_ind_sets_2} it follows that
\[
	\frac{\log_2 i(H)}n \ge \frac 23+\frac{\log_2 3}{3d} + \Omega\left(\frac{3^d}{d4^d} \right) 
\]
and hence one {\it must} add exponentially small error terms to make Conjecture \ref{conj_cohen} correct.

\bigskip
We also would like to mention that the problem of estimating the number of independent sets has some natural extensions. One can count the strong independent sets (see \cite{CPST}, \cite{OrdRoth}, \cite{EJPR}) or general $j$-independent sets (see \cite{BalobShab}).

\subsection{Problem statement for abelian groups}
By analogy with $\mathbb{Z}/d\mathbb{Z}$ we can consider any finite abelian group $\Gr$ and ask the following question: \emph{what is the number of triples $A,B,C\subset \Gr$ such that there is no solution $x+y=z$, where $x\in A$, $y\in B$ and $z\in C$?} If $|\Gr|=N$, then clearly, this number is at least
$$
  3\cdot 4^{N}-3\cdot 2^N+1,
$$
because any triple with one empty subset always fits. If we take one of $A$, $B$ or $C$ equals a one--element set, that would add $N\cdot 3^{N+1}$ more triples, and therefore the lower bound is 
\begin{equation}\label{f:lower_3^N}
    3\cdot 4^{N} + N \cdot 3^{N+1} + O(2^N).
\end{equation}

Our first aim is to show that this lower bound is tight up to some summand exponentially smaller than $4^N$. Then we further elaborate our argument to find the correct exponent from \eqref{f:lower_3^N}. 

\bigskip
The organization of
this
paper is the following. In section 2 we consider the case when $\Gr$ is the prime field.  In the next section 3 we deduce the main result in the general case. The arguments in the prime case  are simpler but nevertheless, the scheme of the proof is similar to the case of general abelian group.   Finally, in sections 4 and 5 we will repeat the same scheme but obtain a stronger estimate for the error term, using a new result on structure of dense subsets $A,B$ of $\F_p$ with $A+B \neq \F_p$, see Proposition \ref{p:Semchankau_A+B} below.
A similar instrument was introduced for the first time in  \cite{Semchankau_A(A+A)}, where it has found already some applications to the structure of sets with small Wiener norm.
Also, it allows to estimate size of $A(A+A)$  for any $A\subseteq \F_p$, see \cite{Semchankau_A(A+A)}.
Thus this part of the paper has an independent interest for
Additive Combinatorics.

\section{The case of the prime field}


Let $\Gr$ be a finite abelian group, $\FF{\Gr}$ be  the dual group. It is well--known that $\FF{\Gr}$ is isomorphic to $\Gr$.
In this paper we use the same letter to denote a set $A\subseteq \Gr$ and  its characteristic function $A: \Gr \to \{0,1 \}$.
If $f,g:\Gr \to \C$ are functions, then we write
\[
    (f*g) (x) = \sum_{y\in \Gr} f(y) g(x-y) \,.
\]
The {\it sumset} of sets $A,B \subseteq \Gr$ is
\[
	A+B = \{ a+b ~:~ a\in A,\, b\in B \} \,.
\]
Similarly, one can define the {\it difference set} of $A$ and $B$.
Given  a prime $p$ we write $\F_p$ for the prime field.
All logarithms are to base $2.$ The signs $\ll$ and $\gg$ are the usual Vinogradov symbols.

\bigskip

Now suppose that $A,B\subseteq \Gr$ are two sets and put $\mathcal{C} = \mathcal{C} (A,B) =\Gr \setminus (A+B)$.
By the Cauchy--Davenport inequality, see, e.g., \cite[Theorem 5.4]{TV} if $\Gr = \F_p$ and $A+B\neq \F_p$, that is, $\mathcal{C}\neq \emptyset$, we have
\begin{equation}\label{ineq:C-D}
    |A+B| \ge |A| + |B| - 1 \,.
\end{equation}

\bigskip

We need one of the main results from \cite{Green_AP}.
Recall that for a given subset $\G \subseteq \FF{\Gr}$ and a number  $\eps \in (0,1)$ the set $\Bohr(\Gamma, \eps)$ is called a {\it Bohr neighbourhood} (or a {\it Bohr set}), see, e.g., \cite[Section 4.4]{TV} if
\[
	\Bohr(\Gamma, \eps) = \{ x\in \Gr  ~:~ | \chi(x) -1| \le \eps \,, \forall \chi \in \G \} \,.
\]
Size of $\G$ is called the {\it dimension} of $\Bohr(\Gamma, \eps)$, $\eps$ is the {\it radius} of $\Bohr(\Gamma, \eps)$ and it is
well--known the connection of size of Bohr sets with its dimension and radius, e.g., see \cite[Lemma 4.20]{TV}
\begin{equation}\label{f:Bohr_size}
	|\Bohr(\Gamma, \eps)| \ge (\eps/2\pi)^{|\G|} |\Gr|  \,.
\end{equation}
Now we are ready to formulate the required extracted  result from \cite{Green_AP} about shifts of Bohr sets in sumsets (actually one can check that the arguments work for composite $p$ and even for an arbitrary finite abelian group as well).
The quantity $\log^{1/4} p$ below is not so important and it has chosen just for convenience.

\begin{theorem}
    Let $p$ be a sufficiently large prime number and  $A,B$ be sets from  $\F_p$, $|A| \ge \alpha p$, $|B| \ge \beta p$,
    $\kappa = \sqrt{\alpha \beta} \ge \log^{-1/4} p$.
    Also, let $|\mathcal{C} (A,B)| = \gamma p$ and $\omega \le \gamma$ be a parameter, $\omega \ge \exp(-(\log^{1/4} p))$.
    Then there is a shift $x\in \F_p$ and a Bohr set $\mathcal{B} = \Bohr(\G,\frac{\kappa}{64 d})$, $|\G| = d\le 1000 \kappa^{-2} \log (1/\omega)$,
    such that
\[
	\left| (A + B)  \cap (\mathcal{B} + x)  \right| \ge \left( 1- \frac{32 \omega}{\kappa} \right) |\mathcal{B}| \,.
\]
\label{t:Green_AP}
\end{theorem}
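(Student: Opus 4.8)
The statement is essentially an intermediate step of \cite{Green_AP}, and the plan is to reproduce that Fourier/Bohr-set argument; the only place where the ambient group enters is the size bound \eqref{f:Bohr_size}, so the scheme works over an arbitrary finite abelian group, with the additive characters $e_p(r\cdot)$ of $\F_p$ (where $e_p(x):=\exp(2\pi i x/p)$) replaced by the characters $\chi\in\FF{\Gr}$. First I would restate the hypothesis in terms of the convolution $1_A*1_B$. Writing $1_A=\alpha+f_A$ and $1_B=\beta+f_B$ with $f_A,f_B$ of mean zero, and using that the convolution of a constant with a mean-zero function vanishes, one gets $1_A*1_B=\alpha\beta p+f_A*f_B$. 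Since $\mathcal{C}=\F_p\setminus(A+B)$ is exactly the zero set of $1_A*1_B$, this yields the rigid constraint $f_A*f_B\equiv-\alpha\beta p$ on all of $\mathcal{C}$; in particular $\sum_t|f_A*f_B(t)|^2\ge|\mathcal{C}|(\alpha\beta p)^2=\gamma\alpha^2\beta^2p^3$, so $A$ and $B$ must have a substantial common large spectrum. The goal then becomes: produce a narrow, low-dimensional Bohr set $\mathcal{B}$ and a shift $x$ with $1_A*1_B(t)>0$ for all but a $\tfrac{32\omega}{\kappa}$-fraction of $t\in\mathcal{B}+x$ --- equivalently $|\mathcal{C}\cap(\mathcal{B}+x)|\le\tfrac{32\omega}{\kappa}|\mathcal{B}|$ --- since such $t$ lie in $A+B$.

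For the generators of $\mathcal{B}$ I would take the common large spectrum $\G=\{r\neq0:\ |\widehat{1_A}(r)\,\widehat{1_B}(r)|\ge\eta\,\alpha\beta p^2\}$ for a threshold $\eta$ fixed at the end. Cauchy--Schwarz and Parseval give $\sum_{r\neq0}|\widehat{1_A}(r)\widehat{1_B}(r)|\le\sqrt{\alpha\beta}\,p^2=\kappa p^2$, whence $|\G|=:d\le1/(\eta\kappa)$; this forces the choice $\eta\asymp\kappa/\log(1/\omega)$, which gives exactly $d\le1000\,\kappa^{-2}\log(1/\omega)$. Set $\mathcal{B}=\Bohr(\G,\tfrac{\kappa}{64d})$, nonempty by \eqref{f:Bohr_size}. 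For $t=x+b$ with $b\in\mathcal{B}$ one splits $1_A*1_B(t)=\alpha\beta p+\tfrac1p\sum_{r\in\G}\widehat{1_A}(r)\widehat{1_B}(r)e_p(r(x+b))+\tfrac1p\sum_{0\neq r\notin\G}\widehat{1_A}(r)\widehat{1_B}(r)e_p(rt)$. The radius $\tfrac{\kappa}{64d}$ is chosen so that, over $b\in\mathcal{B}$, the structured middle term varies by at most $\tfrac{\kappa}{64d}\cdot\tfrac1p\sum_{r\in\G}|\widehat{1_A}\widehat{1_B}|\le\tfrac{\alpha\beta p}{64d}$ --- so it is essentially a constant $c(x)$ on $\mathcal{B}+x$ --- and then one selects $x$ (averaging over shifts: $c(x)$ has mean zero in $x$, so $\max_x c(x)\ge0$, while the unstructured tail is small for a positive density of $x$) for which $1_A*1_B$ stays within a small multiple of $\alpha\beta p$ of $\alpha\beta p+c(x)$ on all but a $\tfrac{32\omega}{\kappa}$-fraction of $\mathcal{B}+x$, giving the conclusion.

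The hard part is exactly this last step. Because $1_A*1_B$ is unsigned it can vanish on a large subset of $\mathcal{B}+x$ even when its average over $\mathcal{B}+x$ has the right size, so one must show it is close to $\alpha\beta p$ \emph{pointwise} on almost all of $\mathcal{B}+x$; this requires controlling the unstructured tail $\tfrac1p\sum_{0\neq r\notin\G}\widehat{1_A}(r)\widehat{1_B}(r)e_p(rt)$ far beyond what a crude spectral estimate gives (the naive bound forces $\eta$ to be polynomially small in $p$, incompatible with the required bound on $d$). This is where the rigidity $f_A*f_B\equiv-\alpha\beta p$ on $\mathcal{C}$ has to be exploited again, or --- as in \cite{Green_AP} --- where a short density-increment iteration is run to peel off the structured contribution, and it is also where the explicit constants $32$, $64$, $1000$ in the statement get pinned down. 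Everything else, including the passage from $\F_p$ to a general finite abelian group, is routine.
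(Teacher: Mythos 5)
The paper does not prove this statement at all: Theorem \ref{t:Green_AP} is quoted as an ``extracted result'' from \cite{Green_AP}, with only a parenthetical remark that Green's arguments extend to general finite abelian groups. So there is no in-paper proof to compare against, and the relevant question is whether your proposal would stand on its own. It does not: it is an outline whose central step is explicitly left to the reference. Your opening moves are a reasonable reconstruction of Green's setup --- the decomposition $1_A*1_B=\alpha\beta p+f_A*f_B$, the choice of $\G$ as the common large spectrum with threshold $\eta\asymp\kappa/\log(1/\omega)$ giving $d\le 1/(\eta\kappa)\lesssim\kappa^{-2}\log(1/\omega)$, and the radius $\kappa/(64d)$ making the major-arc part essentially constant on translates of $\mathcal{B}$. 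But the entire content of the theorem sits in the step you defer: showing that for some shift $x$ the convolution is strictly positive on all but a $32\omega/\kappa$ fraction of $\mathcal{B}+x$. Your own accounting shows why this is not a formality: the crude $L^2$ bound on the minor-arc tail only controls the density of bad points at scale $\eta/\kappa\asymp 1/\log(1/\omega)$, whereas the theorem demands $\omega/\kappa$, which is exponentially smaller in the regime $\omega=\exp(-\log^{1/4}p)$ that the application actually uses. Closing that gap is exactly Green's iteration/increment argument, and without it the quantitative trade-off between $d$ and the exceptional fraction --- the whole point of the statement --- is unproved.

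Two smaller points. First, the hypotheses give $|A|\ge\alpha p$, not equality, so the mean-zero decomposition should be carried out with the true densities $\alpha'=|A|/p$, $\beta'=|B|/p$ and the conclusion transferred back to $\alpha,\beta,\kappa$; this is cosmetic but worth doing cleanly since $\kappa$ enters the final bound. Second, the shift selection is glossed over: you need a single $x$ for which the structured term is not too negative \emph{and} the tail is small on most of $\mathcal{B}+x$ simultaneously, which requires a joint averaging over $x$ rather than the two separate observations you give. Neither of these is fatal, but the missing increment argument is.
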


A similar result on almost periodicity of convolutions was obtained in \cite{CLS}.
For the sake of the completeness we formulate a consequence of it.
The dependencies on the parameters in Theorems \ref{t:Green_AP}, \ref{t:CLS} are slightly different but in our regime ($\a,\beta \gg 1$)  this is  absolutely not important.

\begin{theorem}
	Let $\Gr$ be a finite abelian group, $N=|\Gr|$ and $A,B$ be sets from  $\Gr$, $|A| \ge \alpha N$, $|B| \ge  \beta N$,
	$\kappa = \sqrt{\alpha \beta}$.
	Also, let $q\ge 2$ and $\eps \in (0,1)$ be parameters.
	Then there is a shift $x\in \Z/N\Z$ and a Bohr set $\mathcal{B} = \Bohr(\G,c \eps)$, $c>0$ is an absolute constant, $|\G| \ll q/\eps^2$,
	such that
	\begin{equation}\label{f:CLS}
	\left| (A + B)  \cap (\mathcal{B} + x)  \right| \ge  \left( 1 - \left( \frac{\eps}{\kappa} \right)^q \right)|\mathcal{B}| \,.
	\end{equation}
\label{t:CLS}
\end{theorem}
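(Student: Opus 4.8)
The plan is to deduce Theorem~\ref{t:CLS} from the $L^q$--almost periodicity of convolutions proved in \cite{CLS}, followed by a short Markov--type localization; essentially all of the content is imported from \cite{CLS}, and the role of the argument here is merely to repackage it. First I would reduce to exact densities: replacing $\alpha,\beta$ by the true densities $|A|/N\ge\alpha$ and $|B|/N\ge\beta$ only increases $\kappa$, hence shrinks the target error $(\eps/\kappa)^q$, and leaves the claimed bound $|\G|\ll q/\eps^2$ untouched; so assume $|A|=\alpha N$, $|B|=\beta N$. Set $\phi=\frac1{|B|}(A*B)$, so that $0\le\phi\le 1$, $\frac1N\sum_{x\in\Gr}\phi(x)=\alpha$, and --- the only property that matters --- $\phi(x)>0$ exactly when $x\in A+B$. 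It therefore suffices to produce a Bohr set $\mathcal B=\Bohr(\G,c\eps)$ with $|\G|\ll q/\eps^2$ and a shift $x\in\Gr$ on which $\phi$ vanishes for at most a $(\eps/\kappa)^q$--fraction of the points of $x+\mathcal B$.

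Secondly I would apply the almost--periodicity machinery of \cite{CLS} to the convolution $\phi=\frac1{|B|}(A*B)$. It furnishes a large set of $L^q$--almost periods of $\phi$ whose bounded--fold sumset contains a genuine Bohr set $\mathcal B=\Bohr(\G,c\eps)$ of dimension $|\G|\ll q/\eps^2$ (here one invokes \eqref{f:Bohr_size} to keep the dimension controlled), such that $\|\tau_b\phi-\phi\|_{L^q}\le\delta$ for every $b\in\mathcal B$, where $\|f\|_{L^q}^q=\frac1N\sum_x|f(x)|^q$ and $\delta$ can be taken as small as an absolute constant times $\eps$ times the natural normalizing scale of $\phi$ appearing in \cite{CLS}. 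Tracking that scale through the argument is exactly what makes $\kappa=\sqrt{\alpha\beta}$ the relevant quantity, and it lets us arrange
\[
   2^{\,q+1}\delta^q\;\le\;\Bigl(\tfrac{\eps}{\kappa}\Bigr)^{q}\alpha^{\,q+1}.
\]
(One could equally run Theorem~\ref{t:Green_AP} in place of \cite{CLS} here; as noted after its statement, the parameters differ slightly but are immaterial when $\alpha,\beta\gg 1$.) It is worth emphasising that no two--sided ($\mathcal B-\mathcal B$) information is required: in the step below we only compare $\phi(x+b)$ with $\phi(x)$ for a single $b\in\mathcal B$.

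The third and last step is elementary. Put $S=\{x\in\Gr:\phi(x)\ge\alpha/2\}$; since $\frac1N\sum_x\phi(x)=\alpha$ and $\phi\le 1$, we have $|S|\ge\frac{\alpha}2 N$. If $x\in S$ and $x+b\notin A+B$ then $\phi(x+b)=0$ while $\phi(x)\ge\alpha/2$, so $\mathbf 1[x+b\notin A+B]\le(2/\alpha)^q|\phi(x+b)-\phi(x)|^q$. Averaging this over $b\in\mathcal B$ and then over $x\in S$, and writing $|(A+B)\cap(x+\mathcal B)|=|\mathcal B|\bigl(1-\frac1{|\mathcal B|}\sum_{b\in\mathcal B}\mathbf 1[x+b\notin A+B]\bigr)$, one obtains
\[
   \frac1{|S|}\sum_{x\in S}\frac{|(A+B)\cap(x+\mathcal B)|}{|\mathcal B|}\;\ge\;1-\Bigl(\tfrac2\alpha\Bigr)^{q}\frac{N}{|S|}\cdot\frac1{|\mathcal B|}\sum_{b\in\mathcal B}\|\tau_b\phi-\phi\|_{L^q}^q\;\ge\;1-\frac{2^{\,q+1}\delta^q}{\alpha^{\,q+1}}\;\ge\;1-\Bigl(\tfrac{\eps}\kappa\Bigr)^{q},
\]
using $N/|S|\le 2/\alpha$, $\|\tau_b\phi-\phi\|_{L^q}^q\le\delta^q$ for each $b\in\mathcal B$, and the displayed choice of $\delta$. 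Hence some $x\in S$ satisfies $|(A+B)\cap(x+\mathcal B)|\ge\bigl(1-(\eps/\kappa)^q\bigr)|\mathcal B|$, which is exactly \eqref{f:CLS}.

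The only point needing genuine care is the second step: extracting from \cite{CLS} simultaneously the Bohr--dimension bound $|\G|\ll q/\eps^2$ and the precise size of the almost--periodicity error $\delta$ in terms of $\alpha$ and $\beta$. Everything else is bookkeeping, and since we only ever use the statement in the regime $\alpha,\beta\gg 1$ this dependence is not delicate; one is free to adjust the absolute constant $c$ and the implied constants so that the two displayed inequalities close up.
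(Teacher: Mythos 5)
Your proposal is correct in outline and follows essentially the same route as the paper: both quote the $L^q$--almost--periodicity of $A*B$ from \cite{CLS} and then run a Markov/averaging argument over shifts of the Bohr set to locate a good translate $x$. The one substantive difference is in the averaging step: you normalize to $\phi=\tfrac1{|B|}(A*B)$ and restrict to the level set $\{\phi\ge\alpha/2\}$, which forces you to absorb factors of $\alpha^{q+1}$ into $\delta$ and hence (for small $q$, say $q=2$) costs a factor of order $\alpha^{-2/q}$ in the Bohr dimension, so the stated bound $|\G|\ll q/\eps^2$ with an absolute constant is not literally recovered for small $\alpha$; the paper instead keeps $A*B$ unnormalized and uses the global moment bound $\sum_{x}(A*B)^q(x)\ge\kappa^{2q}N^{q+1}$, which makes the contradiction close with no $\alpha$--loss. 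Since the theorem is only ever invoked with $\alpha,\beta\gg1$, this discrepancy is harmless in context, as you and the paper both note.
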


Indeed, by the main result of  \cite[Theorem 1.2]{CLS} for almost periodicity of $f(x):= (A*B)(x)$, we have
\[
   \sum_{x\in \Gr} \sum_{t\in \mathcal{B}} \left( (A*B)(x+t) - (A*B)(x) \right)^{q} \le (\eps \kappa N)^q |\mathcal{B}| N
\]
and hence if \eqref{f:CLS} does not hold for any $x\in \Gr$, then we obtain a contradiction in view of the simple bound
\[
    \kappa^{2q} N^{q+1} \le \sum_{x\in \Gr} (A*B)^q (x) \,.
\]
\bigskip

Let us write $\#\{R\ |\ P(R)\}$ for the number of objects $R$, satisfying a property $P$. We simply write $\#\{R\}$, if the required property is clear from the context. Also, let us introduce the  following nonstandard notation. For sets $X, Y \subseteq \F_p$ we write $X \neq Y$ if the equation $x = y$ has no solutions in $x \in X, y \in Y$. 

Now we are ready to prove the main result of this section.

\begin{theorem}
    Let $p$ be a  sufficiently large prime number.
    Then the number of sets $A,B,C$ from $\F_p$ such that  $x+y\neq z$, $x\in A$, $y\in B$, $z\in C$
    equals
\begin{equation}\label{f:A+B=C}
    3 \cdot 4^{p} + O((4-c_*)^p)  \,,
\end{equation}
	where $c_*>0$ is an absolute constant.
\label{t:A+B=C}
\end{theorem}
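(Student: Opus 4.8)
The plan is to split the count of valid triples $(A,B,C)$ according to the densities of $A$ and $B$, using that $C$ is automatically constrained: the condition $x+y\neq z$ for all $x\in A$, $y\in B$, $z\in C$ says exactly that $C\subseteq \mathcal{C}(A,B) = \F_p\setminus(A+B)$. Hence for fixed $A,B$ the number of admissible $C$ is precisely $2^{|\mathcal{C}(A,B)|} = 2^{p-|A+B|}$, and the total we want is $S := \sum_{A,B\subseteq\F_p} 2^{p-|A+B|}$. The main term $3\cdot 4^p$ should come from the three ``degenerate'' regimes where one of the three sets is empty (or $A+B=\F_p$, forcing $C=\emptyset$), and everything else must be absorbed into $O((4-c_*)^p)$.

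First I would isolate the dominant contributions. If $A=\emptyset$ or $B=\emptyset$ then $A+B=\emptyset$, $C$ is arbitrary, giving $2\cdot 2^p\cdot 2^p$ with an overcount of $2^p$ for $A=B=\emptyset$; together with the symmetric count of pairs $(A,B)$ with $A+B=\F_p$ and $C=\emptyset$ arbitrary... more carefully: the triples with $C=\emptyset$ number $4^p$, the triples with $A=\emptyset$ number $4^p$, the triples with $B=\emptyset$ number $4^p$, and the pairwise/triple intersections are each $O(2^p)$ by inclusion–exclusion, yielding $3\cdot 4^p + O(2^p)$ from the union of these three families. So it remains to bound
\[
    S' := \sum_{\substack{A,B\neq\emptyset \\ A+B\neq\F_p}} 2^{p-|A+B|} = \sum_{\substack{A,B\neq\emptyset \\ \mathcal{C}(A,B)\neq\emptyset}} 2^{|\mathcal{C}(A,B)|}
\]
by $O((4-c_*)^p)$; note these are precisely the triples with all three of $A,B,C$ nonempty.

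Next I would break $S'$ by the size of $\kappa=\sqrt{\alpha\beta}$ where $|A|=\alpha p$, $|B|=\beta p$. When $\kappa$ is small, say $|A|\le \delta p$ (the case $|B|$ small being symmetric), we bound crudely: $\#\{A : |A|\le\delta p\}\le 2^{H(\delta)p}$ where $H$ is binary entropy, and for such $A$ we always have $|A+B|\ge |A|\ge 1$... that is too weak; instead use Cauchy–Davenport $|A+B|\ge |A|+|B|-1$, so $2^{|\mathcal{C}(A,B)|}\le 2^{p-|A|-|B|+1}$, and summing over all $B$ gives $\sum_B 2^{p-|B|+1}\cdot 2^{-|A|+1}\ll 3^p\cdot 2^{-|A|}$; summing over $A$ with $|A|\le\delta p$ gives $\ll 3^p \cdot 2^{H(\delta)p}$, which is $O((4-c_*)^p)$ for $\delta$ small enough since $\log 3 + H(\delta) < 2$. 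Actually the cleanest uniform bound is: for all nonempty $A,B$, Cauchy–Davenport gives $2^{|\mathcal{C}(A,B)|}\le 2\cdot 2^{p-|A|-|B|}$, hence $S'\le 2\sum_{A,B\neq\emptyset}2^{p-|A|-|B|} = 2\cdot 2^p(2^p-1)^2/4^p$-type bound... let me just say $\le 2\cdot 2^p\cdot(\sum_{k\ge1}\binom{p}{k}2^{-k})^2 = 2\cdot 2^p(3/2)^{2p}(1-2^{-p})^2 \approx 2\cdot (9/2)^p$ — this is already smaller than $4^p$! So in fact $S' = O((9/2)^p)$ trivially... but $9/2 > 4$, so that is \emph{not} good enough, and this is exactly where the real work lies.

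The hard part will therefore be the regime $\kappa \gg 1$ (both $A,B$ of positive density) with $\mathcal{C}(A,B)$ nonempty but possibly large. Here the naive Cauchy–Davenport bound $|\mathcal{C}| \le p - |A| - |B| + 1$ is too lossy because it ignores the rigid structure forced on $A,B$ when $A+B$ misses many elements. The strategy is to invoke Theorem \ref{t:Green_AP} (or Theorem \ref{t:CLS}): if $|\mathcal{C}(A,B)| = \gamma p$ with $\gamma$ bounded below, then $A+B$ must contain $(1-o(1))$ of a translate of a low-dimensional Bohr set $\mathcal{B}$, of dimension $d\ll \kappa^{-2}\log(1/\omega)$ and size $|\mathcal{B}|\ge (\varepsilon/2\pi)^d p$. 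This severely restricts $A$ and $B$ — roughly, $A$ and $B$ are each confined, up to a small fraction, to a union of few Bohr-set cosets or to a structured set, so the number of such pairs $(A,B)$ with a given $|\mathcal{C}|=\gamma p$ is at most $2^{(2-\gamma-\eta(\gamma))p}$ for some positive $\eta(\gamma)>0$; then $2^{|\mathcal{C}|}\cdot\#\{(A,B)\} \le 2^{(2-\eta(\gamma))p}$, and multiplying by the trivial $2^p$ from... wait, $C$ is already counted in the $2^{|\mathcal{C}|}$ factor. Summing the geometric-type series over the $O(p)$ possible values of $\gamma$ gives $O((4-c_*)^p)$ once we check $2-\gamma+\gamma\cdot 0 - \eta < 2$, i.e. that the entropy saving $\eta(\gamma)$ strictly beats the loss. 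Concretely one splits further: either $\gamma$ is bounded away from both $0$ and $1$ and Green's theorem gives a genuinely bounded-dimensional Bohr set so the structural saving is $\Omega(1)$; or $\gamma\to 1$, meaning $A+B$ is tiny, but then $|A|,|B|$ are tiny too (since $|A+B|\ge\max(|A|,|B|)$), contradicting $\kappa\gg 1$; or $\gamma\to 0$, where $\mathcal{C}$ is small and we can afford the crude bound $2^{\gamma p}\cdot 4^p\cdot 2^{-\text{(entropy of a small set)}}$... this last sub-case is the most delicate and is where I expect to spend the most care, balancing the $2^{\gamma p}$ factor against how much the constraint ``$A+B$ misses a specific $\gamma p$-set'' costs in the count of $(A,B)$ — here one uses that missing even \emph{one} prescribed element, combined with $\kappa\gg1$, already forbids a positive proportion of pairs, a fact extractable from the Bohr-set conclusion with $\omega$ taken just below $\gamma$. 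Assembling the three sub-cases and summing over $\gamma$ and over the small-$\kappa$ region completes the bound $S' = O((4-c_*)^p)$, and hence the theorem.
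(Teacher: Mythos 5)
Your setup matches the paper's: the main term $3\cdot 4^p+O(2^p)$ from triples with an empty set, a crude entropy bound disposing of sparse sets, and Green's almost-periodicity theorem (Theorem \ref{t:Green_AP}) in the dense regime. You also correctly diagnose that Cauchy--Davenport alone gives only $O((9/2)^p)$ and that all the work lies in the dense case. The gap is in how you propose to extract the saving there. You assert that for dense $A,B$ with $|\mathcal{C}(A,B)|=\gamma p$ the number of pairs $(A,B)$ is at most $2^{(2-\gamma-\eta(\gamma))p}$, ``roughly'' because $A$ and $B$ are confined to structured sets. But Theorem \ref{t:Green_AP} gives no such confinement of $A$ and $B$: it says that the \emph{sumset} $A+B$ covers all but a $32\omega/\kappa$ fraction of some shifted Bohr set $\mathcal{B}+x$ of size $\Omega_\omega(p)$, i.e.\ it constrains $\mathcal{C}$ (hence $C$), not $A$ or $B$. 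A genuine structure theorem for $A$ and $B$ themselves is the content of the much heavier Proposition \ref{p:Semchankau_A+B} used only in Section 4 for the sharper $(3-c_*)^p$ bound; it is neither available at this stage nor needed.

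Moreover, once you integrate $C$ out into the factor $2^{|\mathcal{C}|}$, the Bohr-set information becomes hard to use: the two available bounds $|\mathcal{C}|\le p-a-b+1$ and $|\mathcal{C}|\le p-|\mathcal{B}|+E$ combine only as a minimum, and at the dominant sizes $a\approx b\approx p/4$ the Cauchy--Davenport bound is the smaller one, so this route recovers only $O(p\cdot 4^p)$ --- just barely failing. The paper avoids this by keeping $C$ explicit and ordering the choices $C\to A\to B$: $C$ is chosen inside a universe of only $p-q$ points (plus at most $E$ exceptional points of the Bohr set), then $A$ avoids $C$, then $B$ avoids $C-A$ with $|C-A|\ge |C|+|A|-1$; the resulting sum $\sum_c\binom{p-q}{c}3^{-c}\le(4/3)^{p-q}$ converts the Bohr restriction into the multiplicative factor $(3/4)^q=(3/4)^{\Omega_\omega(p)}$ that turns the borderline $4^p$ into $(4-c_*)^p$. (Your small-$\gamma$ subcase is fine, but only via the elementary count $\#\{(A,B):A+B\neq\F_p\}\le p\cdot 3^p$; you cannot take ``$\omega$ just below $\gamma$'' in Theorem \ref{t:Green_AP} when $\gamma$ is subconstant, both because of the hypothesis $\omega\ge\exp(-\log^{1/4}p)$ and because $|\mathcal{B}|$ degrades as $\omega\to 0$.)
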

\begin{proof}
    As we have noted above, if one takes $A,B$ or $C$ equals the empty set and the rest is an arbitrary, then we obtain
    $3 \cdot 4^p + O(2^p)$ of the required sets.
    The quantity $3 \cdot 4^p$ is the main term and our task is to estimate the rest.

    Suppose that all sets $A,B,C$ are non--empty.
    Put $a=|A|$, $b=|B|$, $c=|C|$.
	Clearly, $|\mathcal{C} (A,B)| \ge c$ hence by inequality \eqref{ineq:C-D}, we get $c \le p-|A+B| = p-a-b+1$ and, similar bounds hold for $a$ and $b$.
    We begin with a crude upper bound for
    the number of triples $A,B,C$ with small $a,b$ or $c$.
    Namely, for $M \le p/16$, say, one has
\begin{equation}\label{tmp:29.04_1}
    \sum_{a\le M} \binom{p}{a} \sum_{b=1}^{p} \binom{p}{b} \sum_{c=1}^{p-a-b+1}  \binom{p-a-b+1}{c}
    \le
    2^{p+1}
        \sum_{a\le M} \binom{p}{a} 2^{-a} \sum_{b=1}^p \binom{p}{b} 2^{-b}
            \le
\end{equation}
\begin{equation}\label{tmp:29.04_2}
            \le
            	4\cdot  3^p \binom{p}{M} 2^{-M}
            \le
            4\cdot  3^p \left( \frac{ep}{2M} \right)^M
            <
            4 (3.75)^p \,.
\end{equation}
	Thus $a,b,c \ge p/16$.

	Now we apply Theorem \ref{t:Green_AP} with the parameters $\alpha = \beta = \gamma = \kappa = 1/16$
	and let $\omega \le  2^{-10}$ be a sufficiently small
	number,
	which we will choose later (an alternative way is to use Theorem \ref{t:CLS}).
	Let $d$ and $\mathcal{B}$ be as in Theorem \ref{t:Green_AP}.	
	The number of all possible shifted Bohr sets is $p^{d+1}$. 
	We know that
	$C$ intersects $\mathcal{B}$  by at most  $2^9 \omega |\mathcal{B}| := E$ points.	Let us set $C' := C \cap \mathcal{B}$.
	The number of all sets of size at most $E$ does not exceed  $E\binom{p}{E}$.
	Put $q=|\mathcal{B}| - E \ge |\mathcal{B}|/2 >0$.
 Without loss of generality (since it will not affect the exponent) we assume that $0 \in B$, and therefore $A \neq C$. For fixed $a, b, c$ we obtain
	$$
	\#\{A, B, C\ |\ A + B \neq C\} \leqslant
	$$
	$$
	\leqslant
	\#\{\mathcal{B}\}\ 
	\#\{C'\}\ 
	\#\{C \setminus C' \subseteq \F_p\setminus \mathcal{B}\}\  
	\#\{A\ |\ A \neq C\}\ 
	\#\{B\ |\ B \neq C - A\}
	\leqslant 
	$$
	$$
	\leqslant
	p^{d+1} E\binom{p}{E}
	\binom{p-q}{c}\binom{p}{a}\binom{p-c-a+1}{b}.
	$$
	Summing it for all possible triples $a, b, c$ and considering obvious inequalities $c \leqslant p - q, a \leqslant p - c, b \leqslant p - c - a + 1$ we see that the number of all possible triples $A,B,C$
	is at most
\[
	\sigma
	:=
	p^{d+1} E \binom{p}{E} 
	\sum_{c=1}^{p-q} \binom{p-q}{c} \sum_{a=1}^{p-c} \binom{p}{a} \sum_{b=1}^{p-c-a+1} \binom{p-c-a+1}{b}
	\le
\]
\[
	\le
	2E p^{d+1} \binom{p}{E} \sum_{c=1}^{p-q} \binom{p-q}{c} 2^{p-c} \sum_{a=1}^{p-c} \binom{p}{a} 2^{-a}
	\le
	2E p^{d+1} \binom{p}{E} 3^p \sum_{c=1}^{p-q} \binom{p-q}{c} 3^{-c}
	\le
\]
\begin{equation}\label{tmp:29.04_3}
	\le
	p^{d+2} \left( \frac{ep}{E}\right)^E  \left( \frac{3}{4} \right)^q \cdot 4^p \,.
\end{equation}
	Put $l= \log (1/\o)$.
	By estimate \eqref{f:Bohr_size}, we know that $|\mathcal{B}| \ge q \gg p \exp(-Cl \cdot \log l)$, where $C>0$ is an absolute constant.
	Taking $\omega$ sufficiently small constant, we can attain
\begin{equation}\label{tmp:29.04_3'}
	\left( \frac{ep}{E}\right)^E \ll \left( \frac{C_1 \exp (C l\log l)}{\omega} \right)^{2^{10} \o q} \le \left( \frac{8}{7} \right)^q \,,
\end{equation}
	where $C_1>0$ is another absolute constant.
	Thus the number  $d \le 1000 \kappa^{-2} \log (1/\omega)$ is a constant and hence the multiple $p^{d+2}$ in \eqref{tmp:29.04_3} is negligible.
    Whence for sufficiently large $p$, we have, say,
$$
	\sigma \ll  \left( \frac{9}{10} \right)^q \cdot 4^p  \ll \left( \frac{9}{10} \right)^{p \exp(-Cl \cdot \log l)} \cdot 4^p = (4-c_*)^p \,.
$$
    This completes the proof.
$\hfill\Box$
\end{proof}

\section{The general case}

The case of an arbitrary finite abelian group $\Gr$ requires
more refined arguments and ge\-ne\-ra\-li\-za\-tions.
For example, inequality \eqref{ineq:C-D} is a particular case of Kneser's Theorem, see, e.g.,  \cite[Theorem 5.5]{TV}, which takes place in any abelian group.

\begin{theorem}
	Let $\Gr$ be an abelian group, and $A,B\subseteq \Gr$ be sets.
	Then
	\[
	|A+B| \ge |A+H| + |B+H| - |H| \,,
	\]
	where $H := \{ x\in \Gr ~:~ A+B+x = A+B \}$.
\label{t:Kneser}
\end{theorem}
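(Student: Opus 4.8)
\medskip
\noindent
The plan is to prove Kneser's theorem by the Dyson ($e$-)transform combined with induction, after first reducing to the case in which the stabilizer is trivial. We may assume $A$ and $B$ are finite and nonempty, for otherwise the inequality is vacuous; then $H=\mathrm{Stab}(A+B)$ is finite, since $H+s\subseteq A+B$ for any fixed $s\in A+B$. The sets $A+B$, $A+H$ and $B+H$ are unions of cosets of $H$, so passing to the quotient $\Gr'=\Gr/H$ via the projection $\pi$ and writing $A'=\pi(A)$, $B'=\pi(B)$, we get $|A+B|=|H|\,|A'+B'|$, $|A+H|=|H|\,|A'|$, $|B+H|=|H|\,|B'|$, while the stabilizer of $A'+B'=\pi(A+B)$ in $\Gr'$ is trivial because $H$ was the full stabilizer of $A+B$. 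Thus it suffices to establish the reduced statement: \emph{for finite nonempty $A,B$ in an abelian group with $\mathrm{Stab}(A+B)=\{0\}$ one has $|A+B|\ge|A|+|B|-1$.}

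I would prove the reduced statement by induction on $|B|$, uniformly over all abelian groups. For $|B|=1$ the set $A+B$ is a translate of $A$, the hypothesis forces $\mathrm{Stab}(A)=\{0\}$, and $|A+B|=|A|=|A|+|B|-1$. For $|B|\ge 2$, for $e\in\Gr$ set $A_e=A\cup(B+e)$ and $B_e=B\cap(A-e)$; one checks at once that $A_e+B_e\subseteq A+B$, that $|A_e|+|B_e|=|A|+|B|$ (inclusion--exclusion after translating by $e$), that $B_e\ne\emptyset$ precisely when $e\in A-B$, and that the transform is trivial ($A_e=A$, $B_e=B$) exactly when $B+e\subseteq A$. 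If $B+e\subseteq A$ for \emph{every} $e\in A-B$, then fixing $b_0\in B$ gives $(B-b_0)+A\subseteq A$, whence $B-b_0\subseteq\mathrm{Stab}(A)\subseteq\mathrm{Stab}(A+B)=\{0\}$ and $|B|=1$, a contradiction; so some $e$ gives a genuinely nontrivial transform with $\emptyset\ne B_e\subsetneq B$ and $A\subsetneq A_e$. If $\mathrm{Stab}(A_e+B_e)=\{0\}$, then since $|B_e|<|B|$ the inductive hypothesis applies to $(A_e,B_e)$ and yields $|A+B|\ge|A_e+B_e|\ge|A_e|+|B_e|-1=|A|+|B|-1$, which finishes this case.

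The remaining case, $H_e:=\mathrm{Stab}(A_e+B_e)\ne\{0\}$, is where I expect the real work to be. Applying the reduced statement in the quotient $\Gr/H_e$ to the images of $A_e,B_e$ (legitimate since these images have strictly fewer than $|B|$ elements and trivial sumset-stabilizer) and lifting back gives $|A_e+B_e|\ge|A_e+H_e|+|B_e+H_e|-|H_e|\ge|A|+|B|-|H_e|$, so it would suffice to show that $A+B$ contains at least $|H_e|-1$ points beyond $A_e+B_e$, that is, $|(A+B)\setminus(A_e+B_e)|\ge|H_e|-1$. This is nontrivial because $A_e+B_e$ is $H_e$-periodic while $A+B$, having trivial stabilizer, is not, so $A+B$ must meet the $H_e$-cosets in a genuinely irregular way; quantifying this --- understanding how the stabilizer of the transformed sumset can only grow under iterated transforms, and why such growth is paid for by extra elements of $A+B$ --- is exactly the point at which Kneser's theorem is deeper than Cauchy--Davenport, and I would handle it by iterating the transform and tracking the (nondecreasing, hence eventually stationary) stabilizer of the transformed sumset, essentially Kneser's original coset-counting argument. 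Assembling the cases proves the reduced statement, and the quotient reduction of the first paragraph then yields the theorem in full.
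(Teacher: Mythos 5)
The paper does not actually prove this statement; Kneser's theorem is quoted from \cite[Theorem 5.5]{TV}, so there is no internal proof to compare against. Judged on its own terms, your proposal is a correct setup of the standard Dyson $e$-transform proof, but it has a genuine gap exactly where you say it does, and that gap is the entire content of Kneser's theorem beyond Cauchy--Davenport.

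Everything up to the final case is fine: the reduction to trivial stabilizer via $\Gr/H$, the base case, the elementary properties of $A_e=A\cup(B+e)$, $B_e=B\cap(A-e)$ (that $A_e+B_e\subseteq A+B$, $|A_e|+|B_e|=|A|+|B|$, and that triviality of the transform for all $e\in A-B$ forces $B-b_0\subseteq\mathrm{Stab}(A)=\{0\}$), and the case $\mathrm{Stab}(A_e+B_e)=\{0\}$ all check out. But in the remaining case $H_e:=\mathrm{Stab}(A_e+B_e)\neq\{0\}$ you reduce the problem to showing $|(A+B)\setminus(A_e+B_e)|\ge|H_e|-1$ and then offer only the assertion that this follows from ``iterating the transform and tracking the stabilizer, essentially Kneser's original coset-counting argument.'' That is not a proof; it is a restatement of the difficulty. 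The inequality is not automatic from $H_e$-periodicity of $A_e+B_e$ versus aperiodicity of $A+B$: aperiodicity of $A+B$ only guarantees that \emph{some} $H_e$-coset meets $A+B$ properly, which by itself yields at most one extra point outside $A_e+B_e$ unless one argues much more carefully. The standard completions of this step (choosing $e$ to make $|H_e|$ minimal among all nontrivial transforms and analyzing the cosets of $H_e$ met by $A$ and $B+e$ but not saturated, or Kneser's original argument counting, for each $H_e$-coset $g+H_e$ meeting $A+B$ but not contained in $A_e+B_e$, the pairs of cosets of $A$ and $B$ summing into it) each require a further page of genuinely new ideas that your sketch does not supply. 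As written, the proof is incomplete at its only hard point. Since the paper itself treats this as a known black-box result, the practical fix is either to cite \cite[Theorem 5.5]{TV} as the paper does, or to carry out the coset-counting argument in full.
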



Now we are ready to prove an analogue of Theorem \ref{t:A+B=C} and we will appeal to the proof of this result.

\begin{theorem}
	Let $\Gr$ be a finite abelian group, $N=|\Gr|$.
	Then the number of sets $A,B,C$ from $\Gr$ such that  $x+y\neq z$, $x\in A$, $y\in B$, $z\in C$
	equals
	\begin{equation}\label{f:A+B=C,N}
	3 \cdot 4^{N} + O((4-c_*)^N)  \,,
	\end{equation}
	where $c_*>0$ is an absolute constant.
	\label{t:A+B=C,N}
\end{theorem}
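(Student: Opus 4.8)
The plan is to mimic the proof of Theorem \ref{t:A+B=C} step by step, replacing the Cauchy--Davenport inequality by Kneser's Theorem (Theorem \ref{t:Kneser}) and Theorem \ref{t:Green_AP} by its abelian-group analogue Theorem \ref{t:CLS}. As before, triples with one of $A,B,C$ empty contribute $3\cdot 4^N + O(2^N)$, which is the main term, so from now on we assume $A,B,C$ are all non-empty and write $a=|A|$, $b=|B|$, $c=|C|$, $\mathcal{C}=\mathcal{C}(A,B)=\Gr\setminus(A+B)$. Since $C\sbeq \mathcal{C}$ we have $c\le N-|A+B|$; by Kneser's Theorem $|A+B|\ge |A+H|+|B+H|-|H|\ge a+b-|H|$, so a priori this only gives $c\le N-a-b+|H|$, which is too weak when $|H|$ is large. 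The first genuinely new step is therefore a case split on the size of the stabilizer $H=H(A,B)$: if $|H|$ is large, then $A+B$ is a union of $H$-cosets, so $A$, $B$, and $C$ are each essentially unions of $H$-cosets (up to the $A+B$ constraint inside a single coset class), and one counts such triples directly — the number of such triples is roughly $(\text{number of triples in }\Gr/H)\cdot 2^{O(|H|)\cdot(\text{few cosets})}$, and because $\Gr/H$ has order $N/|H|$ which is small, this count is dominated by $4^{N/|H|}\cdot(\ldots)$, exponentially smaller than $4^N$ as soon as $|H|\ge 2$. (One has to be a little careful: the constraint is only that $A+B$ avoids $C$, so within the cosets that $A+B$ does not fully cover there is still freedom, but the Kneser bound $|A+B|\ge a+b-|H|$ keeps this under control and the crude binomial estimates of the prime case go through.)

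Having disposed of the large-stabilizer case, we may assume $|H|$ is bounded, say $|H|\le H_0$ for an absolute constant, and then Kneser gives $c\le N-a-b+H_0$, which is as good as Cauchy--Davenport up to an additive constant. From here the argument is the one from Theorem \ref{t:A+B=C} almost verbatim. First, the crude bound \eqref{tmp:29.04_1}--\eqref{tmp:29.04_2} shows that triples with $\min(a,b,c)\le N/16$ contribute at most $O((3.75+o(1))^N)$. So assume $a,b,c\ge N/16$ and apply Theorem \ref{t:CLS} with $\alpha=\beta=1/16$, $\kappa=1/16$, $\eps$ a small absolute constant, and $q$ a large absolute constant: we obtain a shift $x\in\Z/N\Z$ and a Bohr set $\mathcal{B}=\Bohr(\G,c\eps)$ with $|\G|\ll q/\eps^2$ bounded, so by \eqref{f:Bohr_size} we have $|\mathcal{B}|\gg_{\eps,q} N$, and $|(A+B)\cap(\mathcal{B}+x)|\ge(1-(\eps/\kappa)^q)|\mathcal{B}|$. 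Since $C\cap(A+B)=\emptyset$, the set $C':=C\cap(\mathcal{B}+x)$ has size at most $(\eps/\kappa)^q|\mathcal{B}|=:E$, which is a small constant fraction of $|\mathcal{B}|$; putting $r:=|\mathcal{B}|-E\ge |\mathcal{B}|/2\gg N$ we count, exactly as in the prime case: choose the shifted Bohr set ($N^{|\G|+1}$ ways), choose $C'$ ($\le E\binom{N}{E}$ ways), choose $C\setminus C'$ inside $\Gr\setminus(\mathcal{B}+x)$, then choose $A$ avoiding $C$ and $B$ avoiding $C-A$ (harmlessly assuming $0\in B$ as in the excerpt). Summing the resulting product $N^{|\G|+1}E\binom{N}{E}\binom{N-r}{c}\binom{N}{a}\binom{N-c-a+1}{b}$ over $a,b,c$ and using $\sum_a\binom{N}{a}2^{-a}=(3/2)^N$ twice as in \eqref{tmp:29.04_3} yields a bound of the shape $N^{O(1)}(ep/E)^E(3/4)^r\cdot 4^N$; choosing $\eps$ small makes $(ep/E)^E\le(8/7)^r$, and since $r\gg N$ the whole thing is $O((4-c_*)^N)$.

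The main obstacle is the large-stabilizer case, which has no counterpart in the prime field: one must verify that when $A+B$ is a union of many $H$-cosets the total count really does collapse to something exponentially below $4^N$, taking proper account of the residual freedom in the partially-covered cosets — this requires choosing the threshold $H_0$ and bookkeeping the binomial sums coset-by-coset, but it is robust because the number of cosets $N/|H|$ drops by at least a factor of $2$. A secondary technical point is that Theorem \ref{t:CLS} produces a Bohr set in $\Gr$ together with a shift indexed by $\Z/N\Z$; one must make sure the enumeration ``number of shifted Bohr sets is $N^{|\G|+1}$'' is legitimate in the general abelian setting (it is: a Bohr set is determined by its at most $|\G|$ characters, each chosen from $\FF\Gr\cong\Gr$ of size $N$, plus one shift), and that $|\G|$ stays bounded, which it does since $|\G|\ll q/\eps^2$ with $q,\eps$ absolute constants. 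Everything else is a direct transcription of the prime-field proof.
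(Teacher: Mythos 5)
Your overall strategy --- Kneser's Theorem in place of Cauchy--Davenport, a dichotomy on the stabilizer, and almost periodicity (Theorem \ref{t:CLS}) for the main range --- is the same as the paper's, and your bounded-stabilizer case is sound. The genuine gap is in your large-stabilizer case, and it comes from where you put the threshold. You declare $|H|\ge 2$ (or $|H|\ge H_0$ for a constant $H_0$) to be ``large'' and propose to dispose of it by observing that $A,B,C$ are ``essentially unions of $H$-cosets'', so that the count collapses to roughly $4^{N/|H|}$ times a correction. That is not what happens: only $A+B$ is $H$-periodic, while $A$ and $B$ restricted to each coset they meet are arbitrary nonempty subsets, so each contributes on the order of $2^{N}$ choices, not $2^{N/|H|}$. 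The correct bookkeeping, for a fixed exact stabilizer $H$ with $|H|=h$ and $n=N/h$, is
\[
\sum_{k_A,k_B}\binom{n}{k_A}\binom{n}{k_B}(2^h-1)^{k_A+k_B}\,2^{\,N-(k_A+k_B-1)h}
= 2^{N+h}\left(\frac{2^{h+1}-1}{2^{h}}\right)^{2n},
\]
using Kneser to force $|A+B|\ge (k_A+k_B-1)h$. If you run this with the ``crude binomial estimates of the prime case'', i.e.\ with $2^{hk}$ in place of $(2^h-1)^{k}$, you get $2^{N+h}4^{n}$, which at $h=2$ equals $4\cdot 4^{N}$ --- no saving whatsoever --- so the step as you describe it fails exactly at the bottom of your ``large'' range. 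The case can be rescued: the sharper count above gives $4\cdot(7/2)^{N}$ at $h=2$ and better for larger $h$, and the union bound over the at most $\exp(O(\log^{2}N))$ subgroups of $\Gr$ is harmless; but you neither perform this computation nor flag that it is the crux.

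The paper sidesteps this entirely by splitting at $|H|\lessgtr c_1 N$ rather than at a constant: for $|H|\le c_1N$ Kneser costs only a factor $2^{|H|}\le 2^{c_1N}$, which the $(3/4)^{q}$ saving coming from the Bohr set absorbs once $c_1$ is chosen small; for $|H|>c_1N$ the quotient has $n=N/|H|=O(1)$ cosets, the crude coset count is trivially $\ll n^{3}2^{3n}2^{3N/2}$, and only $N^{O(1)}$ subgroups of bounded index need to be unioned over. (The paper also takes $H$ to be the stabilizer of $A+C$ so as to constrain $B$, rather than of $A+B$ to constrain $C$; that difference is immaterial.) To repair your write-up, either adopt the paper's threshold or carry out the $(2^{h}-1)$-refined coset count explicitly for all $2\le h\le N/2$ together with the union bound over subgroups.
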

\begin{proof}
	As in the proof of Theorem \ref{t:A+B=C} (see calculations in \eqref{tmp:29.04_1},  \eqref{tmp:29.04_2}) we can assume that two sets from $A,B,C$ are large, say, $\Omega(N)$.
	Indeed, using the notation of the proof of this Theorem, we see the rest can be estimated as $2^N \binom{N}{M}^2 \le (3.75)^N$, say.
	Without losing of the generality, suppose that $A$ and $B$ are large, i.e., $|A|, |B| \ge c_0 N$ with an absolute constant $c_0>0$.
	Put  $H := \{ x\in \Gr ~:~ A+C+x = A+C \} \le \Gr$, and suppose firstly  that $h:=|H| \le c_1 N$, where $c_1>0$ is a sufficiently small absolute constant.
	Then Kneser's inequality gives us $|A+C| \ge |A|+|C|- |H|$.
	Hence combining this with  Theorem \ref{t:Green_AP} or Theorem \ref{t:CLS} and  acting  as in the proof of Theorem \ref{t:A+B=C}, we get
\[
	E p^{d+1} \binom{p}{E}  \sum_{c=1}^{N-q} \binom{N-q}{c} \sum_{a=1}^{N-c} \binom{N}{a} \sum_{b=1}^{N-c-a+h} \binom{N-c-a+h}{b}
		\le
	N^{d+2} 2^h \left( \frac{eN}{E}\right)^E  \left( \frac{3}{4} \right)^N \cdot 4^N
\]	
    (again $d$ is as in Theorem \ref{t:Green_AP}).
	Hence
	as in inequality \eqref{tmp:29.04_3'} we obtain the required asymptotic formula \eqref{f:A+B=C,N} taking sufficiently small $c_1$.
	Thus $h> c_1N$ and we can assume that $h<N$ because otherwise $A+C = \Gr$.
	Hence $h\le N/2$.
	Put $n=N/h \ge 2$, $n\in \Z$.
	Let $k_A$ be the number of different cosets $\Gr/H$, which intersects our random set $A$	(and, similarly, define $k_B$, $k_C$ for $B$ and $C$).
     From Kneser's Theorem we see that
     $k_A, k_B, k_C <n$
    otherwise
	the correspondent sumset
	coincides with the whole $\Gr$.
    Applying Theorem \ref{t:Kneser} again, we obtain  that $b \le (n- k_A - k_C + 1)h$.
	Using the arguments as in \eqref{tmp:29.04_1}, \eqref{tmp:29.04_2} one more time, combining with Theorem \ref{t:Kneser}, we derive a crude upper  bound for the number of possible $A,B$ and $C$
\[
	\sum_{k_A=1}^{n-1} \binom{n}{k_A} 2^{k_A h}
		\sum_{k_C=1}^{n-k_A} \binom{n}{k_C} 2^{k_C h}
			\sum_{k_B=1}^{n-k_A-k_C+1} \binom{n-k_A-k_C+1}{k_B} 2^{k_B h}
	\le
\]
\[
	\le
	n 2^{3n}
	\sum_{k_A=1}^{n-1}  2^{k_A h} \sum_{k_C=1}^{n-k_A} 2^{k_C h} \cdot  2^{(n-k_A-k_C+1)h}
	\le
	n^3 2^{3n} 2^{N+h} \le n^3 2^{3n} 2^{3N/2} \,.
\]	
	Since $h\ge c_1 N$,
	it follows that
	that $n\le c_1^{-1} = O(1)$ and hence the multiple $n^3 2^{3n}$ in the formula above is negligible.
	Also, it is easy to see that the number of subgroups in $\Gr$ of index at most $n = O(1)$ is $N^{O(1)}$
	(e.g., consider the canonical homomorphism of left cosets of $H$ in $\Gr$ to the symmetric group on $n$ letters) and this latter number is also negligible.
	This completes the proof.
$\hfill\Box$
\end{proof}

\section{An improvement}

In this section we obtain an improvement of Theorem \ref{t:A+B=C}, using other new tools, e.g., the Fourier transform on $\Gr$.
Our main argument works in the case of the prime field only although some statements hold  to be true for a general finite abelian group $\Gr$.

\bigskip

We denote the Fourier transform of a function  $f : \Gr \to \mathbb{C}$ by~$\FF{f},$ namely,
\begin{equation}\label{F:Fourier}
\FF{f}(\chi) =  \sum_{x \in \Gr} f(x) \overline{\chi(x)} \,,
\end{equation}
where $\chi \in \FF{\Gr}$ is an additive character on $\Gr$.
We rely on the following basic identities.
The first one is called the Plancherel formula and its particular case $f=g$ is called the Parseval identity
\begin{equation}\label{F_Par}
\sum_{x\in \Gr} f(x) \ov{g (x)}
=
\frac{1}{|\Gr|} \sum_{\chi \in \FF{\Gr}} \widehat{f} (\chi) \ov{\widehat{g} (\chi)} \,.
\end{equation}
Another  particular case of (\ref{F_Par}) is
\begin{equation}\label{svertka}
\sum_{x\in \Gr} |(f*g) (x)|^2
=
\sum_{x\in \Gr} \Big|\sum_{y\in \Gr} f(y) g(x-y) \Big|^2
= \frac{1}{|\Gr|} \sum_{\chi \in \FF{\Gr}} \big|\widehat{f} (\chi)\big|^2 \big|\widehat{g} (\chi)\big|^2 \,.
\end{equation}
and the identity
\begin{equation}\label{f:inverse}
f(x) = \frac{1}{|\Gr|} \sum_{\chi \in \FF{\Gr}} \FF{f}(\chi) \chi(x)
\,.
\end{equation}
is called the inversion formula.

\bigskip

Our proof is based on several auxiliary statements. 
First of all, we need a consequence of the following result of J.M. Pollard \cite{Pollard} (also, see \cite[Corollary 1.2]{Tao_Pollard}).
Given sets $A,B \subseteq \F_p$  and $\eps \in (0,1)$ we write $A+_\eps B$ for the set of $x\in \F_p$, having at least $\eps p$ representations as sum of $a+b$, $a\in A$, $b\in B$.

\begin{theorem}
	Let $A,B \subseteq \F_p$ be sets and $\eps \in (0,1)$ be a real number, such that $\sqrt{\eps} p < |A|, |B|$.
	Then
\begin{equation}\label{f:Pollard}
	|A+_\eps B| \ge \min \{p, |A|+|B|\} -  2p \sqrt{\eps} \,.
\end{equation}
\label{t:Pollard}
\end{theorem}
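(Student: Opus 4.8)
The plan is to prove Theorem~\ref{t:Pollard} as a corollary of Pollard's theorem on the number of representations. Recall Pollard's classical result: if $A,B \subseteq \F_p$ and $1 \le t \le \min\{|A|,|B|\}$, then writing $r_{A+B}(x) = \#\{(a,b) \in A\times B : a+b = x\}$ one has
\[
	\sum_{x\in \F_p} \min\{t, r_{A+B}(x)\} \ge t \left( \min\{p, |A|+|B|-t\} \right) \,.
\]
First I would set $t = \lceil \eps p \rceil$; the hypothesis $\sqrt{\eps}\, p < |A|, |B|$ guarantees $\eps p < |A|,|B|$, so $t \le \min\{|A|,|B|\}$ and Pollard's inequality applies. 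The idea is then to split the sum $\sum_x \min\{t, r_{A+B}(x)\}$ according to whether $x \in A +_\eps B$ or not, and to bound the contribution of each part from above, comparing with the lower bound above to extract a lower bound on $|A+_\eps B|$.

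The key computation runs as follows. Partition $\F_p$ into $S := A +_\eps B = \{x : r_{A+B}(x) \ge \eps p\}$ and its complement. For $x \in S$ we use the trivial bound $\min\{t, r_{A+B}(x)\} \le t$, contributing at most $t\,|S|$. For $x \notin S$ we have $r_{A+B}(x) < \eps p \le t$, so $\min\{t, r_{A+B}(x)\} = r_{A+B}(x) < \eps p$, and since there are at most $p$ such $x$ the total contribution is at most $\eps p \cdot p \le \eps p^2$. Combining with Pollard's lower bound,
\[
	t\,|S| + \eps p^2 \ge \sum_{x} \min\{t, r_{A+B}(x)\} \ge t \left( \min\{p, |A|+|B|-t\} \right) \,,
\]
so that
\[
	|S| \ge \min\{p, |A|+|B|-t\} - \frac{\eps p^2}{t} \,.
\]
Since $t \ge \eps p$ we get $\eps p^2 / t \le p$, which already gives something, but to reach the stated bound $2p\sqrt{\eps}$ I would instead keep $t = \lceil \eps p \rceil \ge \eps p$ and note $\eps p^2/t \le \eps p^2/(\eps p) = p$; this is too weak, so the right choice of $t$ is not $\eps p$ but something larger. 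The correct move is to pick $t \approx \sqrt{\eps}\, p$: then $t \le \min\{|A|,|B|\}$ still holds by hypothesis, the ``small'' part contributes at most $\eps p \cdot p$ as before (the threshold defining $S$ is still $\eps p$, unchanged), and $\eps p^2 / t \approx \sqrt{\eps}\, p$, while the $-t$ loss in $\min\{p,|A|+|B|-t\}$ is also $\approx \sqrt{\eps}\,p$; adding the two losses gives the claimed $2p\sqrt{\eps}$ (after absorbing ceiling/floor adjustments, which only cost $O(1)$ and are harmless for large $p$, or can be handled by a slightly more careful choice $t = \lfloor \sqrt{\eps}\, p \rfloor$).

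The main obstacle — really the only subtlety — is the bookkeeping around the parameter $t$: one must choose $t$ so that simultaneously $t \le \min\{|A|,|B|\}$ (needing the hypothesis $\sqrt{\eps}\,p < |A|,|B|$, which is exactly why it appears), the term $\eps p^2/t$ is $\le p\sqrt{\eps}$, and the term $t$ subtracted inside the $\min$ is also $\le p\sqrt{\eps}$; the value $t = \lfloor \sqrt{\eps}\, p\rfloor$ threads all three. Everything else is the elementary splitting argument above together with a direct invocation of Pollard's theorem as quoted in \cite{Pollard} and \cite[Corollary 1.2]{Tao_Pollard}. I would present the proof in four or five lines once $t$ is fixed.
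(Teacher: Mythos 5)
Your derivation is correct and is precisely the standard deduction the paper has in mind: Theorem~\ref{t:Pollard} is quoted there without proof, as a consequence of Pollard's inequality (via \cite{Pollard} and \cite[Corollary 1.2]{Tao_Pollard}) obtained exactly by taking $t\approx\sqrt{\eps}\,p$ and splitting the sum $\sum_{x}\min\{t,r_{A+B}(x)\}$ over $A+_\eps B$ and its complement. The only nit is that $t=\lceil\sqrt{\eps}\,p\rceil$ (admissible because $|A|,|B|$ are integers strictly exceeding $\sqrt{\eps}\,p$) is the cleaner choice, since with $t=\lfloor\sqrt{\eps}\,p\rfloor$ the term $\eps p^2/t$ can slightly exceed $\sqrt{\eps}\,p$; either way the discrepancy is the harmless $O(1)$ you already flag.
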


We have the following combinatorial observation: if sets $X, Y, Z \subseteq \F_p$ satisfy $X + Y \neq Z$, then they satisfy $X \neq Z - Y$ as well. Let us prove the  following robust version of this
truism. 
\begin{lemma}\label{l:struct}
    Let  $X, Y, Z$ be nonempty subsets of  $\F_p$ and $\delta \in (0, 1)$ be a parameter such that $Z \neq X +_{\delta} Y$. Also, let us assume that $|X| \geqslant \eps p$  for a certain  $\eps \in (0, 1)$. Let $T > 1$ be a parameter. Then there exists $X' \subseteq X, |X'| \leqslant |X|/T$ such  that
    $$
    X\setminus{X'} \neq Z -_{\delta T/\eps}Y \,.
    $$
\end{lemma}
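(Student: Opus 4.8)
The plan is to exploit a double-counting / averaging argument over the elements of $X$. Fix the set $X' \subseteq X$ of those $x \in X$ which \emph{do} lie in $Z -_{\delta T/\eps} Y$; by definition of the lemma, once we remove $X'$ we obtain exactly $X \setminus X' \neq Z -_{\delta T/\eps} Y$, so all that must be verified is the size bound $|X'| \le |X|/T$. To do this, I would count weighted incidences between $X$ and $Z$ through the convolution with $Y$. For each $x \in X'$ there are, by definition of $-_{\delta T/\eps}$, at least $(\delta T/\eps) p$ pairs $(y,z) \in Y \times Z$ with $x = z - y$, i.e. $z = x+y$. Summing over $x \in X'$ gives
\[
	|X'| \cdot \frac{\delta T}{\eps} p \;\le\; \#\{ (x,y,z) \in X\times Y\times Z ~:~ x+y=z \} \;=\; \sum_{z\in Z} (X*Y)(z) \,.
\]
So the task reduces to bounding the right-hand side, the number of solutions to $x+y=z$ with $x\in X$, $y\in Y$, $z\in Z$.

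The key point is the hypothesis $Z \neq X +_\delta Y$: this says precisely that no $z\in Z$ has $(X*Y)(z) \ge \delta p$. Therefore every $z\in Z$ contributes strictly less than $\delta p$ to the sum, and in fact it suffices to use $(X*Y)(z) < \delta p$ for $z \in Z$ together with the trivial bound $|Z| \le p$:
\[
	\sum_{z\in Z} (X*Y)(z) \;<\; \delta p \cdot |Z| \;\le\; \delta p^2 \,.
\]
Combining the two displays,
\[
	|X'| \cdot \frac{\delta T}{\eps} p \;<\; \delta p^2 \,,
\]
hence $|X'| < \eps p / T \le |X| / T$, where the last step uses the assumption $|X| \ge \eps p$. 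This is exactly the claimed bound, so the lemma follows. (Strictly we get strict inequality, which of course implies $\le |X|/T$.)

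I expect this argument to be entirely routine; there is no real obstacle, only the bookkeeping of the two thresholds $\delta$ and $\delta T/\eps$ and making sure the factor $\eps$ from $|X| \ge \eps p$ lands on the correct side. The one place to be slightly careful is that Pollard's theorem (Theorem \ref{t:Pollard}) is \emph{not} needed for this particular lemma — it is listed among the auxiliary tools of the section but is used elsewhere; here a bare first-moment estimate on $(X*Y)$ over $Z$ does the job. If one wanted a cleaner statement one could even drop the nonemptiness hypotheses on $Y,Z$, but keeping them is harmless. The lemma is the ``robust'' replacement for the trivial implication $X+Y\neq Z \Rightarrow X \neq Z-Y$: allowing a popularity threshold $\delta$ on the left forces us to pay by deleting a $1/T$-fraction of $X$ and inflating the threshold on the right by the factor $T/\eps$, which is precisely the trade-off quantified above.
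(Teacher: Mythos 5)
Your proof is correct and is essentially the paper's own argument: both take $X'$ to be the popular part of $X$ in $Z-_{\delta T/\eps}Y$ and double-count the triples $(x,y,z)$ with $x+y=z$, using $Z\neq X+_\delta Y$ to cap the count at $\delta p\,|Z|\le \delta p^2$ and then $|X|\ge\eps p$ to conclude $|X'|\le |X|/T$. The paper phrases the upper bound via pigeonhole on a single popular $z_0$ rather than summing $(X*Y)(z)$ over $Z$, but this is the same estimate.
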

\begin{proof}
    Let $\eta := \delta T/\eps$.
    Suppose that for a certain  $X' \subseteq X$ one has  $X' \subseteq Z -_{\eta}Y$. Then we have at least $|X'|\eta p$ triples $(x, y, z)$ such that $x + y = z$. By the pigeonhole principle we see that there is  $z_0$ with at least $|X'|\eta p/|Z|$ representations $z_0 = x + y$. By assumption  $Z \neq X +_{\delta}Y$ and hence  we derive the inequality $|X'|\eta p /|Z| \leqslant \delta p$, which gives $|X'| \leqslant |Z|\delta /\eta \leqslant p\delta/\eta \leqslant |X|/T$.
$\hfill\Box$
\end{proof}

\bigskip 

Secondly,
we need the well--known Chang's Theorem see, e.g., \cite{TV}.
Recall that for a set $A\subseteq \Gr$ and $\eps \in (0,1]$ the {\it spectrum} $\Spec_\eps (A)$ is defined as
$$
\Spec_\eps (A) := \{ \chi\in \FF{\Gr} ~:~ |\FF{A} (\chi)| \ge \eps |A| \} \,.
$$
Also, the {\it additive dimension} $\dim (A)$ of a set $A \subseteq \Gr$ is size of the maximal dissociated subset of $A$, i.e., size of maximal $\Lambda \subseteq A$ such that any equation $\sum_{\la\in \Lambda} \eps_\la \la = 0$ with $\eps_\la \in \{0,1,-1\}$ implies $\eps_\la = 0$, $\forall \la \in \Lambda$.

\begin{theorem}
	Let $\Gr$ be a  finite abelian group, $A \subseteq \Gr$, and $\eps \in (0,1]$ be a real number.
	Then
\[
	\dim (\Spec_\eps (A)) \le 2 \eps^{-2} \log (|\Gr|/|A|) \,.
\]
\label{t:Chang}
\end{theorem}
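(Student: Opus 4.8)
\textbf{Proof plan for Theorem \ref{t:Chang} (Chang's Theorem).}

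The plan is to derive the bound on $\dim(\Spec_\eps(A))$ from Rudin's inequality applied to a maximal dissociated subset of the spectrum. First I would let $\Lambda \subseteq \Spec_\eps(A)$ be a dissociated set; by definition of the additive dimension it suffices to bound $|\Lambda|$ by $2\eps^{-2}\log(|\Gr|/|A|)$. The key analytic input is that, for a dissociated family of characters, the associated Rademacher-type sum behaves like a sum of independent random variables, so the function $g = \sum_{\chi\in\Lambda} c_\chi \chi$ has $L^q$ norms controlled by its $L^2$ norm with constant $O(\sqrt{q})$; equivalently $\|g\|_q \ll \sqrt{q}\,\|g\|_2$ for all even integers $q$ (Rudin's inequality). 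I would invoke this as a known fact, since only its consequence is needed here.

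Next I would apply this with the specific choice $c_\chi = \overline{\FF{A}(\chi)}/|\FF{A}(\chi)| \cdot \eps^{-1}|A|^{-1}\cdot(\text{unimodular})$, or more cleanly test the convolution-type identity against $A$ itself. Concretely, consider $\sum_{x} A(x)\, g(x)$ where $g(x) = \sum_{\chi\in\Lambda} \FF{A}(\chi)\overline{\chi(x)}/|\FF{A}(\chi)|$; by Parseval / the definition of $\FF{A}$ this inner product equals $\sum_{\chi\in\Lambda} |\FF{A}(\chi)| \ge \eps|A|\,|\Lambda|$, using the spectrum condition $|\FF{A}(\chi)| \ge \eps|A|$. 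On the other hand, by Hölder's inequality $\sum_x A(x) g(x) \le |A|^{1-1/q}\|g\|_q \ll |A|^{1-1/q}\sqrt{q}\,\|g\|_2$, and Parseval gives $\|g\|_2 = (|\Gr|\,|\Lambda|)^{1/2}$. Combining, $\eps|A|\,|\Lambda| \ll |A|^{1-1/q}\sqrt{q}\,(|\Gr|\,|\Lambda|)^{1/2}$, which rearranges to $|\Lambda| \ll \eps^{-2} q\, (|\Gr|/|A|)^{2/q}$.

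Finally I would optimize over $q$: choosing $q \asymp \log(|\Gr|/|A|)$ makes the factor $(|\Gr|/|A|)^{2/q}$ a constant, yielding $|\Lambda| \ll \eps^{-2}\log(|\Gr|/|A|)$, and a careful tracking of the constants in Rudin's inequality gives precisely the stated constant $2$. The main obstacle is obtaining the sharp constant: the crude argument above gives the right shape but a worse constant, so one must either cite the optimized form of Rudin's inequality or, better, run the tensor-power trick — replace $A$ by $A^k \subseteq \Gr^k$, apply the weaker bound there, and let $k\to\infty$ — which washes out the loss from Hölder's inequality and delivers the clean factor $2\eps^{-2}\log(|\Gr|/|A|)$. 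I would present the tensor-power step as the crux and keep the Rudin estimate as a black box.
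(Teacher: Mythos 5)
The paper never proves this statement: Theorem \ref{t:Chang} is quoted as ``the well--known Chang's Theorem'' with a pointer to \cite{TV}, and downstream only the order of magnitude $O(\eps^{-2}\log(|\Gr|/|A|))$ is used (it enters Proposition \ref{p:Semchankau_A+B} inside an $O(\cdot)$ in the Bohr-set dimension and in the exponent of $t$). Your proposal is essentially the standard textbook proof --- pass to a maximal dissociated subset $\Lambda$ of the spectrum, apply Rudin's inequality, test against $A$ via H\"older, optimize in $q$ --- i.e.\ the argument in the cited reference, so there is no real methodological divergence; it is a correct way to discharge the black box. Two small points. First, your displayed chain mixes normalizations: with unnormalized counting-measure norms one has $\|g\|_2=(|\Gr|\,|\Lambda|)^{1/2}$, but Rudin's inequality in that normalization reads $\|g\|_q\le C\sqrt{q}\,|\Gr|^{1/q-1/2}\|g\|_2$, so the inequality $\eps|A|\,|\Lambda|\ll |A|^{1-1/q}\sqrt{q}\,(|\Gr|\,|\Lambda|)^{1/2}$ as literally written rearranges to $|\Lambda|\ll \eps^{-2}q\,|\Gr|\,|A|^{-2/q}$ (linear in $|\Gr|$, far too weak), not to the correct $|\Lambda|\ll\eps^{-2}q\,(|\Gr|/|A|)^{2/q}$; carried out consistently in the probability-measure normalization the computation does yield the latter, and then $q\asymp\log(|\Gr|/|A|)$ finishes. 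Second, you correctly identify the constant $2$ as the only delicate issue (note the paper fixes logarithms to base $2$); whether the tensor-power trick alone delivers exactly $2$ in this normalization would need to be checked rather than asserted, but for this paper's purposes any absolute constant suffices, since the theorem is only ever invoked up to unspecified constants.
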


Now let us formulate a rather general and simple result on level sets of an arbitrary function.

\begin{lemma}
	Let $\Gr$ be an abelian group, $\d$, $\a$, $\beta$  be real numbers, $\a \le \beta$, $Y$ be a set, and $f,g : \Gr \to \C$ be functions.
	Put
	\[
		X = \{ x\in \Gr ~:~ f(x) \in [\a, \beta] \} \,,
	\]
	and
	\[
		E^{-} = \{ x\in \Gr ~:~ f(x) \in [\a-\delta, \a) \}\,,
		\quad
		E^{+} = \{ x\in \Gr ~:~ f(x) \in (\beta, \beta+\delta] \} \,.
	\]
	Suppose that for all $x\in \Gr\setminus Y$ one has $\| f-g \|_\infty \le \delta$
	and
	write $g(x) = \sum_{j} g(x) S_j (x)$, where $g$
	differs
	by
	at most $\d$ on some disjoint sets $S_j$.
	Then
\begin{equation}\label{f:Semchankau}
	\bigsqcup_{j ~:~ S_j \cap (X\setminus Y) \neq \emptyset} S_j =  ((X\setminus Y) \cup Y') \cup E' \cup E^{''} \,,
\end{equation}
	where  $E' \subseteq  E^{-}$, $E'' \subseteq  E^{+}$, $Y' \subseteq Y$ are some sets.
\label{l:Semchankau}
\end{lemma}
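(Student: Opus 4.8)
\textbf{Proof proposal for Lemma \ref{l:Semchankau}.}

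The plan is to unpack the definitions carefully and track, coset by coset (i.e. set $S_j$ by set $S_j$), where the values $g(x)$ can land relative to the window $[\a,\beta]$. The basic dichotomy is: either a block $S_j$ meets $Y$ or it does not. For the blocks disjoint from $Y$, every point $x$ in $S_j$ satisfies $|f(x)-g(x)|\le\d$; moreover, by hypothesis $g$ varies by at most $\d$ on $S_j$, so if $S_j$ meets $X\setminus Y$ at some point $x_0$, then $g$ on all of $S_j$ stays within $\d$ of $g(x_0)$, which itself lies within $\d$ of $f(x_0)\in[\a,\beta]$. Hence for every $x\in S_j$ we get $f(x)\in[\a-2\d,\beta+2\d]$; refining this via the triangle inequality and the fact that $f(x_0)\in[\a,\beta]$ one should be able to pin $f(x)$ down to $[\a-\d,\beta+\d]$, so $x\in X\cup E^-\cup E^+$. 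This is the content of the claimed containment for the blocks not meeting $Y$: they sit inside $X\cup E^-\cup E^+$, and conversely, since $X\setminus Y\subseteq\bigsqcup_{S_j\cap(X\setminus Y)\ne\emptyset}S_j$ trivially (each point of $X\setminus Y$ lies in exactly one $S_j$, which then qualifies), we recover all of $X\setminus Y$ inside the union.

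Next I would handle the blocks $S_j$ that do intersect $Y$ \emph{and} also intersect $X\setminus Y$: the portion of such an $S_j$ lying in $Y$ is simply absorbed into the ``$Y'$'' term (define $Y'$ to be the union over all qualifying $S_j$ of $S_j\cap Y$, so $Y'\subseteq Y$ as required), while the portion lying outside $Y$ is controlled exactly as in the previous paragraph and lands in $X\cup E^-\cup E^+$. Blocks $S_j$ meeting $Y$ but not meeting $X\setminus Y$ simply do not appear in the left-hand union, so they need not be discussed. Collecting everything: the left-hand union is contained in $(X\setminus Y)\cup Y'\cup E^-\cup E^+$ and contains $X\setminus Y$; to get the claimed \emph{equality} with sets $E'\subseteq E^-$, $E''\subseteq E^+$ one just sets $E'$ and $E''$ to be the actual intersections of the left-hand union with $E^-$ and $E^+$ respectively, and notes that whatever part of $Y'$ is not already needed can be trimmed (or, more cleanly, redefine $Y'$ as the intersection of the left-hand union with $Y$). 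The disjointness of the decomposition on the right follows because $X\setminus Y$, $Y$, $E^-$, $E^+$ are pairwise disjoint by construction (the value ranges $[\a,\beta]$, $[\a-\d,\a)$, $(\beta,\beta+\d]$ are disjoint, and $Y$ is being split off separately), so the only bookkeeping is to make sure a point of the union is assigned to exactly one term, which the intersection-based definitions guarantee.

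The main obstacle, such as it is, is purely notational rather than mathematical: the statement bundles together several slightly redundant pieces ($Y'$, $E'$, $E''$) and one has to be disciplined about defining each as an explicit subset (``the intersection of the left-hand union with $\dots$'') so that the asserted inclusions $E'\subseteq E^-$, $E''\subseteq E^+$, $Y'\subseteq Y$ hold on the nose and the union on the right is genuinely disjoint. The only genuine inequality-chasing is the step $|f(x)-f(x_0)|\le|f(x)-g(x)|+|g(x)-g(x_0)|+|g(x_0)-f(x_0)|\le 3\d$ versus the sharper $\le\d$ that the statement seems to want; I would double-check whether the intended reading is that $g$ is \emph{constant} on each $S_j$ (``differs by at most $\d$'' may be describing the \emph{family} of values, i.e. $g$ takes one value per $S_j$ and consecutive values differ by $\d$), in which case $|g(x)-g(x_0)|=0$ and the bound $|f(x)-f(x_0)|\le 2\d$ is immediate, and then adjust the window constants in $E^-,E^+$ accordingly. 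Either way the argument is the same; only the constant in the radius of $E^\pm$ changes, and the paper presumably applies this with $\d$ already chosen so small that a factor of $2$ or $3$ is harmless.
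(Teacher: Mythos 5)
Your argument is correct and is essentially the paper's own proof: both set $X''=\bigsqcup_{j:\,S_j\cap(X\setminus Y)\neq\emptyset}S_j$, note $X\setminus Y\subseteq X''$ (since the $S_j$ may be assumed to cover $\Gr$), and then define $Y'$, $E'$, $E''$ as the intersections of $X''$ with $Y$, $E^{-}$, $E^{+}$ respectively, so that equality in \eqref{f:Semchankau} is immediate. The constant discrepancy you flag at the end is genuine and worth keeping in mind: for $x\in X''\setminus Y$ the triangle-inequality chain only yields $f(x)\in[\a-3\d,\beta+3\d]$, whereas the paper's proof simply asserts $f(x)\in[\a-\d,\beta+\d]$ without justification, so the lemma should be read with $E^{\pm}$ of width $3\d$ (equivalently, with $\d$ replaced by $\d/3$ in the hypotheses) --- a harmless change in the application (Proposition \ref{p:Semchankau_A+B}), where only the order of magnitude of $\d$ matters.
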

\begin{proof}
	Put $X' = X\setminus Y$, $X'' = \bigsqcup_{j ~:~ S_j \cap X' \neq \emptyset} S_j$.
	Clearly, $X' \subseteq X''$ because we can assume that $\bigsqcup_{j} S_j = \Gr$.
	Our task is to prove that $X''$ coincides with the right--hand side of \eqref{f:Semchankau}.
	If
	$x\in X''$,
	then $f(x) \in [\a-\d, \beta+\d]$ and hence
	$X'' \subseteq X \sqcup E^{'} \sqcup E^{''}$, where $E' := E^{-} \cap X''$ and $E'' := E^{+} \cap X''$.
	Let $Y' := Y\cap  X''$.
	Then $X'' \subseteq ((X\setminus Y) \cup Y') \cup E^{'} \cup E^{''}$ but the
	reverse
	inclusion is obvious (recall that $X'=X\setminus Y \subseteq X''$).
	This completes the proof.
$\hfill\Box$
\end{proof}

\bigskip  	

\begin{remark}
	In follows from the proof of Lemma \ref{l:Semchankau} that the sets $S_j$ do not need to be disjoint and one can consider any covering of $\Gr$ by a collection of 
	sets $S_j$.
	Nevertheless,  then we lose
	disjointedness in \eqref{f:Semchankau} and it is important sometimes to keep it, see \cite{Semchankau_A(A+A)}.
\end{remark}

Theorem \ref{t:Pollard}, combining with Lemma \ref{l:Semchankau} give us a useful result.

\begin{proposition}\label{p:Semchankau_A+B}
	Let $A,B \subseteq \F_p$ be sets, $|A| = \a p$, $|B| = \beta p$, $\d, \eps \in (0,1]$ be real parameters, and $C = \mathcal{C} (A,B) \neq \emptyset$.
	Then there are sets $Y$, $W$, $C\setminus Y \subseteq W$ such that the following holds
	$$
		|Y| \le \eps^2 \alpha^2 \beta \d^{-2} p \,,
	\quad \quad
		W = \bigcup_{j=1}^t S_j \,,
	$$
	where $S_j$ are shifts of a  Bohr set $\mathcal{B}$ of dimension $2 \eps^{-2} \log \a^{-1}$ and radius $\d (\a \beta)^{-1/2}$,
\begin{equation}\label{e:t}
		t\le \exp(O(\eps^{-2} \log \a^{-1} \cdot \log (\eps \d)^{-1})) \,,
\end{equation}
$$
	W \neq A +_{\delta}B,
$$
and
$$
	|W| \le p - |A| -|B| + 2p\sqrt{\d} \,.
$$
\end{proposition}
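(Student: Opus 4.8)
The plan is to package the combinatorial observation of Lemma~\ref{l:struct} together with the almost-periodicity input of Pollard's Theorem~\ref{t:Pollard} and the dissociativity bound of Chang's Theorem~\ref{t:Chang}, via the abstract level-set machinery of Lemma~\ref{l:Semchankau}. First I would run Pollard's theorem on the pair $A,B$: since $C=\mathcal{C}(A,B)\neq\emptyset$ we have $|A+_\delta B|\le p-|A|-|B|+2p\sqrt{\delta}$ at least morally — more precisely $A+_\delta B$ misses $C$ up to the $2p\sqrt\delta$ slack — which will eventually give the last displayed inequality $|W|\le p-|A|-|B|+2p\sqrt\delta$. The real work is to produce the set $W$ of the stated Bohr-structured form that contains $C$ up to a small exceptional set $Y$, and that still satisfies $W\neq A+_\delta B$.

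The key step is to realize $C$ (up to a small error) as a union of translates of a Bohr set coming from the spectrum of $A$. I would apply Lemma~\ref{l:Semchankau} with $f(x)=(A*B)(x)/p$ and $g$ a suitable approximant: specifically, truncate the Fourier expansion of $A*B$ to the spectrum $\Spec_\eta(A)$ for an appropriate threshold $\eta$ (chosen so that $\eta\alpha\le\delta$-ish after rescaling), so that $g(x)=\frac1p\sum_{\chi\in\Spec_\eta(A)}\FF{A}(\chi)\FF{B}(\chi)\chi(x)$; by Parseval the tail $\|f-g\|_\infty$ is controlled on all but a set $Y$ of size $\le\eps^2\alpha^2\beta\delta^{-2}p$ — this is where the bound on $|Y|$ comes from (a standard $L^2\to L^\infty$ level-set count, Chebyshev on the number of $x$ where the tail is large, using $\sum_\chi|\FF B(\chi)|^2=\beta p^2$ and the spectrum cutoff on $|\FF A(\chi)|$). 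Then $g$ varies by at most $\delta$ on the atoms $S_j$ of the Bohr set $\mathcal B=\Bohr(\Spec_\eta(A),\delta(\alpha\beta)^{-1/2})$ — here Chang's theorem bounds the additive dimension of $\Spec_\eta(A)$ by $2\eps^{-2}\log\alpha^{-1}$ (with $\eta$ and $\eps$ matched), so $\mathcal B$ has the claimed dimension, and the number $t$ of translates needed to cover $\Gr$ is $\exp(O(\mathrm{dim}\cdot\log(\text{radius})^{-1}))$, giving \eqref{e:t}. Taking $X=\{x:f(x)=0\}\supseteq C$ (the level set where $A*B$ vanishes, i.e.\ exactly $\mathcal C(A,B)$) and applying Lemma~\ref{l:Semchankau}, we get that $W:=\bigsqcup_{j:S_j\cap(X\setminus Y)\neq\emptyset}S_j$ equals $(C\setminus Y)\cup Y'\cup E'\cup E''$, where $E',E''$ live in the level sets where $f\in[-\delta,0)\cup(0,\delta]$ — but $f\ge0$ always and $f(x)\le\delta$ means $(A*B)(x)\le\delta p$, i.e.\ $x\notin A+_\delta B$; so $W\subseteq\F_p\setminus(A+_\delta B)$ up to the piece $Y'\subseteq Y$, which gives both $C\setminus Y\subseteq W$ and $W\neq A+_\delta B$ (after possibly absorbing $Y'$ into $Y$, or noting $Y'$ is harmless for the non-solvability statement since we only need $W\setminus(\text{small set})$ disjoint from $A+_\delta B$ — I would state it so that the exceptional set $Y$ in the Proposition is exactly this exceptional set).

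The main obstacle I expect is bookkeeping the three small parameters $\delta,\eps,\eta$ so that all the inequalities line up simultaneously: the spectrum threshold $\eta$ must be chosen so that the Bohr radius $\delta(\alpha\beta)^{-1/2}$ genuinely forces $g$ to oscillate by less than $\delta$ on each $S_j$ (this needs $\eta$ comparable to the radius times something, using $|\chi(x)-1|\le$ radius for $x$ in a Bohr atom and summing $\sum_{\chi\in\Spec_\eta}|\FF A(\chi)\FF B(\chi)|\le\alpha p\cdot(\beta p^2)^{1/2}|\Spec_\eta|^{1/2}$ — or rather the cleaner bound via Cauchy--Schwarz and Parseval giving $\le p^2\sqrt{\alpha\beta}$), while at the same time keeping $|Y|\le\eps^2\alpha^2\beta\delta^{-2}p$ with $\eps$ essentially playing the role of $\eta/\alpha$. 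A secondary subtlety is that the $E^\pm$ sets in Lemma~\ref{l:Semchankau} must be seen to be automatically empty or harmless because $A*B$ takes only nonnegative integer multiples of... no, nonnegative real values, so the lower level set $E^-$ (where $f\in[-\delta,0)$) is empty outright, and $E^+$ contributes only points outside $A+_\delta B$, which is exactly what we want. Once these alignments are fixed the rest is substitution of Chang's dimension bound and the Bohr-covering count into the statement.
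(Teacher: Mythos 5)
Your proposal follows essentially the same route as the paper's proof: truncating the Fourier expansion of $A*B$ to $\Spec_\eps(A)$, bounding the exceptional set $Y$ via Parseval/Chebyshev, invoking Chang's Theorem to control the dimension of the Bohr set and a covering argument for $t$, and applying Lemma \ref{l:Semchankau} to the zero level set of $A*B$ together with Theorem \ref{t:Pollard} for the bound on $|W|$. The parameter-matching and $Y'$ issues you flag are handled in the paper exactly as you anticipate (the spectrum threshold is $\eps$ itself and the radius $\delta(\alpha\beta)^{-1/2}$ is verified to make $g$ vary by at most $\delta p$ on Bohr translates via Cauchy--Schwarz and Parseval), so no gap.
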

\begin{proof}
	Let $f(x) = (A*B)(x)$ and $g(x) = p^{-1} \sum_{r\in \Spec_\eps(A)} \FF{f} (r) e(rx)$.
	Due to formulae \eqref{F_Par}, \eqref{svertka}, we have
\[
	\sum_x (f(x) - g(x))^2 \le p^{-1} \sum_{r\notin \Spec_\eps(A)} |\FF{A} (r)|^2 |\FF{B} (r)|^2 \le \eps^2 |A|^2 |B|
\]
	and hence
$
	\| f(x) - g(x) \|_\infty \le \delta p
$
	outside of a set $Y$ of size $|Y| \le \eps^2 \alpha^2 \beta \d^{-2} p$.
	Apply  Lemma \ref{l:Semchankau} with the parameters $\a=\beta =0$ to the constructed functions $f$, $g$ and to the set $Y$.
	Then $E'$ is empty and $|E''| \le p - |A| -|B| + 2 \sqrt{\delta} p$ thanks to  Theorem \ref{t:Pollard}.
	Thus we put $W:=E''= \{ x ~:~ f(x) \le \delta \}$. 
	In other words, $W \neq A +_{\delta}B$. 
	Further by Chang's Theorem \ref{t:Chang} there exists a set $\Lambda$, $|\Lambda| \le 2 \eps^{-2} \log \a^{-1}$ such that any element of $\Spec_\eps(A)$ can be expressed as $\sum_{\la \in \Lambda} \xi_\la \la$,  $\xi_\la \in \{0,1,-1\}$.
	Put $\mathcal{B} = \Bohr (\Lambda, \zeta/ |\Lambda|)$ with $\zeta = \delta (\a \beta)^{-1/2}$.
	Then for any $b\in \mathcal{B}$ and for an arbitrary $x\in \F_p$, one has
	$$
		|g(x+b) - g(x)| \le p^{-1} \sum_{r\in \Spec_\eps(A)} |\FF{A} (r) \FF{\mathcal{B}} (r)| |e(rb)-1|
		\le
		\zeta p^{-1} \sum_{r} |\FF{A} (r)| |\FF{\mathcal{B}} (r)|
		\le \sqrt{\a \beta} \zeta p  = \delta p \,,
	$$
	where we have used the triangle inequality twice, the Cauchy--Schwarz inequality and finally,  formula \eqref{F_Par}.
	It means that the function $g$ differs at most $\d p$ on any shift of $\mathcal{B}$.
	
	Now we want to find some $s_1, s_2, \ldots, s_t$ so that shifts $s_i +\mathcal{B}$ cover the whole group $\F_p$. Let us consider a Bohr set $\mathcal{B}'$, identical to $\mathcal{B}$, but with twice smaller radius. By the definition of Bohr sets it is clear that $\mathcal{B}' - \mathcal{B}' \subseteq \mathcal{B}$.
	Let $S := \{ s_1, s_2, \ldots, s_t\}$ be a maximal set such that shifts $s_i + \mathcal{B}'$ do not overlap.
	Due to its maximality, for any $s\in \F_p$ there exists such $s_i$ so that $s + \mathcal{B}'$ intersects with $s_i + \mathcal{B}'$, and therefore $s \in s_i + \mathcal{B}' - \mathcal{B}' \subseteq s_i + \mathcal{B}$. Therefore, shifts $s_i + \mathcal{B}$ cover the whole group $\F_p$. 
	Since $s_i + \mathcal{B}'$ do not overlap, we have $p \geqslant |S + \mathcal{B}'| = |S||\mathcal{B}'|$, and therefore, by estimate \eqref{f:Bohr_size}, we see that the number $t$ of such shifts is at most $p/|\mathcal{B}'| \leqslant (4 \pi |\Lambda| \zeta^{-1})^{|\Lambda|}$.
	This completes the proof.
$\hfill\Box$
\end{proof}		

\bigskip 

Repeating the proof above, one can obtain the following slightly different 
\begin{proposition}\label{p:Semchankau_A+B_alt}
    	Let $A,B \subseteq \F_p$ be sets, $|A| = \a p$, $|B| = \beta p$, $\eta, \d, \eps \in (0,1]$ be real parameters, and $C$ be a set, $C \neq A +_{\eta} B$.
    
	Then there are sets $Y$, $W$, $C\setminus Y \subseteq W$ such that the following holds
	$$
		|Y| \le \eps^2 \alpha^2 \beta \d^{-2} p \,,
	\quad \quad
		W = \bigcup_{j=1}^t S_j \,,
	$$
	where $S_j$ are shifts of a  Bohr set $\mathcal{B}$ of dimension $2 \eps^{-2} \log \a^{-1}$ and radius $\d (\a \beta)^{-1/2}$,
\begin{equation}\label{e:t_alt}
		t\le \exp(O(\eps^{-2} \log \a^{-1} \cdot \log (\eps \d)^{-1})) \,,
\end{equation}
$$
	W \neq A +_{(\eta + \d)}B,
$$
and
$$
	|W| \le p - |A| -|B| + 2p\sqrt{\eta + \d} \,.
$$
\end{proposition}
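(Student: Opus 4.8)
The plan is to mirror the proof of Proposition \ref{p:Semchankau_A+B} almost verbatim, tracking how the extra parameter $\eta$ propagates. The only structural difference in the hypothesis is that we no longer assume $C = \mathcal{C}(A,B)$ (i.e. $f(x) = 0$ on $C$), but merely that $C \neq A +_\eta B$, which in terms of $f(x) = (A*B)(x)$ means $f(x) \le \eta p$ for every $x \in C$.

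First I would set $f(x) = (A*B)(x)$ and $g(x) = p^{-1} \sum_{r \in \Spec_\eps(A)} \FF{f}(r) e(rx)$ exactly as before. The Plancherel/Parseval computation giving $\sum_x (f(x) - g(x))^2 \le \eps^2 |A|^2 |B|$ is unchanged, so $\|f - g\|_\infty \le \delta p$ outside a set $Y$ with $|Y| \le \eps^2 \alpha^2 \beta \delta^{-2} p$. I would then apply Lemma \ref{l:Semchankau}, but now with the threshold parameters $\alpha = 0$, $\beta = \eta p$ (so that the level set $X = \{x : f(x) \in [0, \eta p]\}$ is precisely the set of $x$ with at most $\eta p$ representations, which contains $C$). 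The companion sets in the lemma become $E^- = \emptyset$ (since $f \ge 0$) and $E^+ = \{x : f(x) \in (\eta p, \eta p + \delta p]\} \subseteq \{x : f(x) \le (\eta + \delta) p\}$. By Pollard's Theorem \ref{t:Pollard} applied with the level $\eta + \delta$ in place of $\delta$, the set $\{x : f(x) \le (\eta + \delta) p\}$ has size at most $p - |A| - |B| + 2p\sqrt{\eta + \delta}$. Setting $W := E''$ from the lemma, which is the union of those $S_j$ meeting $X \setminus Y$, we get $C \setminus Y \subseteq W$, $W \neq A +_{(\eta + \delta)} B$, and the stated size bound.

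The remaining part — covering $\F_p$ by shifts of a Bohr set $\mathcal{B} = \Bohr(\Lambda, \zeta/|\Lambda|)$ with $\zeta = \delta(\alpha\beta)^{-1/2}$, where $\Lambda$ comes from Chang's Theorem \ref{t:Chang} applied to $\Spec_\eps(A)$ — is identical to the corresponding block in the proof of Proposition \ref{p:Semchankau_A+B}: one checks $|g(x+b) - g(x)| \le \delta p$ for $b \in \mathcal{B}$ by the triangle and Cauchy--Schwarz inequalities and formula \eqref{F_Par}, so $g$ varies by at most $\delta p$ on each shift of $\mathcal{B}$; then a maximal packing of shifts of the half-radius Bohr set $\mathcal{B}'$ yields a covering of $\F_p$ by at most $t \le (4\pi|\Lambda|\zeta^{-1})^{|\Lambda|} \le \exp(O(\eps^{-2}\log\alpha^{-1} \cdot \log(\eps\delta)^{-1}))$ shifts of $\mathcal{B}$, using \eqref{f:Bohr_size} and $|\Lambda| \le 2\eps^{-2}\log\alpha^{-1}$.

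I do not expect a genuine obstacle here — the statement is explicitly flagged as obtained by "repeating the proof above". The one point requiring a little care is the bookkeeping in the application of Lemma \ref{l:Semchankau}: one must feed in the correct interval endpoints so that the exceptional set $E''$ that emerges is governed by the shifted Pollard threshold $\eta + \delta$ rather than $\delta$, and one must make sure $Y$ still absorbs exactly the set where $\|f - g\|_\infty > \delta p$ (so the bound on $|Y|$ is unchanged, as claimed). Everything else — the Fourier estimate, Chang's Theorem, the Bohr-set covering — is literally the same computation, so I would simply indicate "repeat the proof of Proposition \ref{p:Semchankau_A+B} with $\beta = \eta p$ in Lemma \ref{l:Semchankau} and with $\eta + \delta$ in place of $\delta$ in Theorem \ref{t:Pollard}" and spell out only these two substitutions in detail.
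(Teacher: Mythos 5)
Your proposal is correct and is exactly the argument the paper intends: the paper's own "proof" of this proposition is just the remark "Repeating the proof above", and your substitutions (level set $[0,\eta p]$ in Lemma \ref{l:Semchankau}, i.e. $\beta=\eta p$, and threshold $\eta+\delta$ in Theorem \ref{t:Pollard}) are precisely the two places where $\eta$ enters; the bound on $|Y|$ and the whole Chang/Bohr-covering block are indeed untouched since they only involve $A$ and $B$. No discrepancy with the paper's approach.
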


\bigskip 

We write Chernoff Bound, according to \cite{TV}:
 \begin{theorem} [\bf Chernoff's inequality]\label{t:chernoff}
 Assume that $X_1, \ldots, X_n$ are jointly independent random variables where 
 $|X_i - \mathbb{E}X_i| \leqslant 1$
 for all $i$.
 Set $X := X_1 + \ldots + X_n$ and
 let $\sigma := \sqrt{\mathrm{Var}(X)}$ be the standard deviation of $X$. Then for any $\lambda > 0$

$$
\mathbb{P}(|X - \mathbb{E}X|
\geqslant 
\lambda\sigma) 
\leqslant 
2\max(
e^{-\lambda^2/4}, 
e^{-\lambda\sigma/2})
$$
 
 \end{theorem}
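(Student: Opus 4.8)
The plan is to run the classical exponential-moment (Bernstein--Chernoff) argument. First I would center the variables: put $Y_i := X_i - \mathbb{E} X_i$, so that $\mathbb{E} Y_i = 0$, $|Y_i| \le 1$, and $Y := Y_1 + \dots + Y_n = X - \mathbb{E} X$ satisfies $\mathrm{Var}(Y) = \sigma^2$. Writing $\sigma_i^2 := \mathrm{Var}(X_i) = \mathbb{E} Y_i^2$, joint independence gives $\sum_{i=1}^{n} \sigma_i^2 = \sigma^2$. Since $\mathbb{P}(|X - \mathbb{E} X| \ge \lambda \sigma) \le \mathbb{P}(Y \ge \lambda\sigma) + \mathbb{P}(-Y \ge \lambda\sigma)$ and $-Y_i$ obeys exactly the same hypotheses as $Y_i$, it suffices to prove the one-sided bound $\mathbb{P}(Y \ge \lambda\sigma) \le \max(e^{-\lambda^2/4}, e^{-\lambda\sigma/2})$; summing the two tails produces the factor $2$.

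The core step is a bound on the moment generating function of $Y$ on the range $0 < t \le 1$. For such $t$ one has $|tY_i| \le 1$, and using the elementary inequality $e^x \le 1 + x + x^2$ valid for all $x \le 1$ together with $\mathbb{E} Y_i = 0$, I would estimate $\mathbb{E}\, e^{tY_i} \le 1 + t^2 \sigma_i^2 \le e^{t^2 \sigma_i^2}$. Multiplying over $i$ (independence) gives $\mathbb{E}\, e^{tY} \le e^{t^2 \sigma^2}$, and then Markov's inequality applied to the nonnegative variable $e^{tY}$ yields $\mathbb{P}(Y \ge \lambda\sigma) \le e^{-t\lambda\sigma}\,\mathbb{E}\, e^{tY} \le \exp(-t\lambda\sigma + t^2\sigma^2)$ for every $0 < t \le 1$.

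It remains to optimise in $t$, and the only subtlety --- indeed the whole reason the bound splits into two cases --- is the constraint $t \le 1$. The unconstrained minimiser of $-t\lambda\sigma + t^2\sigma^2$ is $t_\ast = \lambda/(2\sigma)$, with value $-\lambda^2/4$. If $\lambda \le 2\sigma$, then $t_\ast \le 1$ is admissible and we get $\mathbb{P}(Y \ge \lambda\sigma) \le e^{-\lambda^2/4}$. If $\lambda > 2\sigma$, I would instead take $t = 1$, obtaining $\mathbb{P}(Y \ge \lambda\sigma) \le \exp(\sigma^2 - \lambda\sigma) \le e^{-\lambda\sigma/2}$, where the last step uses $\sigma < \lambda/2$. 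In either case $\mathbb{P}(Y \ge \lambda\sigma) \le \max(e^{-\lambda^2/4}, e^{-\lambda\sigma/2})$; applying the same reasoning to $-Y$ and adding the two bounds completes the argument. There is no genuine obstacle beyond this bookkeeping: the content of the statement is entirely the MGF estimate $\mathbb{E}\, e^{tY_i} \le e^{t^2\sigma_i^2}$ on the unit interval $t\in(0,1]$, and it is precisely the failure of $t_\ast \le 1$ in the large-deviation regime that forces the crossover term $e^{-\lambda\sigma/2}$.
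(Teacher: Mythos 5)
Your argument is correct: the MGF bound $\mathbb{E}\,e^{tY_i}\le 1+t^2\sigma_i^2\le e^{t^2\sigma_i^2}$ via $e^x\le 1+x+x^2$ for $x\le 1$, Markov's inequality, and the case split at $t_*=\lambda/(2\sigma)$ versus $t=1$ exactly reproduce the stated bound (the degenerate case $\sigma=0$ is trivial since the right-hand side is then $\ge 2$). The paper itself gives no proof --- it quotes the inequality directly from Tao--Vu --- and your exponential-moment argument is the standard one used there, so there is nothing to compare beyond noting that your write-up supplies the omitted details correctly.
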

 
 \bigskip

Now we are ready to improve Theorem \ref{t:A+B=C}.

\begin{theorem}
	Let $p$ be a  sufficiently large prime number and $\eps \in (0,1]$ be any real parameter.
	Then the number of sets $A,B,C$ from $\F_p$ such that  $x+y\neq z$, $x\in A$, $y\in B$, $z\in C$
	equals
	\begin{equation}\label{f:A+B=C_new}
	3 \cdot 4^{p} + 3p \cdot 3^{p} + O((3 - c_{*})^p) \,,
	\end{equation}
	for some absolute constant $c_* > 0$.
	\label{t:A+B=C_new}
\end{theorem}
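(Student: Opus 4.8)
The plan is to peel off the three families that produce the main term and the secondary term, and to show that everything else is exponentially smaller than $3^{p}$. Triples in which at least one of $A,B,C$ is empty number exactly $3\cdot4^{p}-3\cdot2^{p}+1$ by inclusion--exclusion. For the secondary term I would first prove the exact identity $\#\{(A,B)\subseteq\F_p\times\F_p:\ z\notin A+B\}=3^{p}$ for every fixed $z$: pairing each $x\in\F_p$ with $z-x$, the condition ``$\neg(x\in A\ \&\ z-x\in B)$ for all $x$'' decouples over the $(p-1)/2$ orbits $\{x,z-x\}$, each admitting $9$ of the $16$ choices for the four incidences $x\in A,\ x\in B,\ z-x\in A,\ z-x\in B$, while the fixed point $z/2$ admits $3$ of $4$, so the count is $3\cdot9^{(p-1)/2}=3^{p}$. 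Running inclusion--exclusion over which of $A,B,C$ is a singleton and discarding the pairwise and triple overlaps as well as the overlap with the empty family (all $O(p^{2}2^{p})$), the triples with all of $A,B,C$ nonempty and at least one of them a singleton number $3p\cdot3^{p}+O(p^{2}2^{p})$. Everything therefore reduces to the estimate $T_{\ge2}:=\#\{(A,B,C):|A|,|B|,|C|\ge2,\ (A+B)\cap C=\emptyset\}=O((3-c_{*})^{p})$.

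To bound $T_{\ge2}$ I would exploit the symmetry of the defining condition: $(A+B)\cap C=\emptyset$ is equivalent to $0\notin A+B+(-C)$, which is symmetric in the triple $(A,B,-C)$, and since negation preserves cardinality we may assume $2\le|A|\le|B|\le|C|$. Applying the Cauchy--Davenport inequality \eqref{ineq:C-D} to the three equivalent disjointness statements $A\cap(C-B)=\emptyset$, $B\cap(C-A)=\emptyset$, $(A+B)\cap C=\emptyset$ gives $|A|+|B|+|C|\le p+1$, and Pollard's Theorem \ref{t:Pollard} gives $|A+B|\ge|A|+|B|-2p\sqrt{\delta}$, so that the sumset defect $|A+B|-(|A|+|B|-1)$ is small unless $|C|$ is. The mechanism of the whole argument is that a valid triple cannot afford three essentially independent choices of entropy $\log_{2}(3/2)$ each.

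The heart of the matter is to structure all three sets simultaneously. Applying Proposition \ref{p:Semchankau_A+B} (for dense sets) to the pairs $(A,B)$, $(C,-B)$, $(C,-A)$ — whose complementary sets $\mathcal C(\cdot,\cdot)$ contain, respectively, $C$, $A$, $B$ — places each of $A,B,C$, outside an $o(p)$-small exceptional part, inside a union of boundedly many shifts of a Bohr set of bounded dimension, with the crucial size bounds $|W_{C}|\le p-|A|-|B|+o(p)$, $|W_{A}|\le p-|B|-|C|+o(p)$, $|W_{B}|\le p-|A|-|C|+o(p)$; enumerating the (polynomially many, once the relevant densities are bounded below) Bohr configurations breaks the cyclic dependence and yields
\[
  T_{\ge2}\ \le\ p^{O(1)}\,2^{\,|W_{A}|+|W_{B}|+|W_{C}|+o(p)}\ \le\ 2^{\,3p-2(|A|+|B|+|C|)+o(p)},
\]
so that as soon as $|A|+|B|+|C|$ exceeds a large enough constant fraction of $p$ (e.g.\ whenever all three sets are $\ge p/4$) this is $O((3-c_{*})^{p})$. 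The remaining, genuinely harder, regime is when one or two of the sets are sparse — this is where the $3p\cdot3^{p}$ and smaller terms live. There I would fix $a=|A|\le b=|B|$ and count triples with $|A+B|=a+b-1+k$ for each defect $k\ge0$: a Freiman $3k-4$--type structure theorem in $\F_{p}$ (after rectification when $|A+B|$ is not too large, and Proposition \ref{p:Semchankau_A+B} together with Chang's Theorem \ref{t:Chang} otherwise — noting that when $|A+B|$ is large, $|C|\le p-|A+B|$ is correspondingly small) confines $A$ and $B$ to arithmetic progressions (or Bohr-structured sets) sharing a common difference and of lengths $a+O(k)$, $b+O(k)$. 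Then the number of such $(A,B)$ is at most $p^{O(1)}\binom{a+O(k)}{O(k)}\binom{b+O(k)}{O(k)}$, $C$ ranges over at most $2^{\,p-a-b+1-k}$ sets, and — because the two progressions share a difference — the ensuing double sum does not factor into two copies of $(3/2)^{p}$; Chernoff's inequality (Theorem \ref{t:chernoff}) is used to discard the atypical counting events, and the resulting sum is estimated directly and stays below $(3-c_{*})^{p}$.

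The main obstacle is exactly this coupling. The naive bound $T_{\ge2}\le2^{p}\sum_{A,B}2^{-|A+B|}$ treats $A$ and $B$ as independent and is already as large as $(9/2)^{p}$; the whole difficulty is to show that $|A+B|$ being close to its minimum, equivalently $\mathcal C(A,B)$ being large, forces $A,B$ (and, via the symmetric conditions, $C$) to share a common arithmetic or Bohr structure, so that the joint entropy of $(A,B,C)$ never reaches $\log_{2}3$ per vertex. Making the rectification thresholds, the Bohr-dimension/complexity bookkeeping and the various density ranges interlock cleanly over the whole range $2\le|A|\le|B|\le|C|\le p$, and in particular near $|A+B|\approx p$ where neither Freiman's theorem nor the bare Pollard bound is decisive, is the delicate point — and is exactly what Proposition \ref{p:Semchankau_A+B}, the paper's substitute for Freiman's theorem for dense sets, is built to supply.
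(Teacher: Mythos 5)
Your reduction to the three families is sound, and the exact identity $\#\{(A,B) : z\notin A+B\}=3\cdot 9^{(p-1)/2}=3^{p}$ is a clean way to extract the $3p\cdot 3^{p}$ term. The two remaining regimes, however, both contain genuine gaps, and in both places the paper does something different from what you propose. In the dense regime you apply Proposition \ref{p:Semchankau_A+B} three times \emph{independently} to $(A,B)$, $(C,-B)$, $(C,-A)$ and add the resulting bounds, obtaining $2^{3p-2(|A|+|B|+|C|)+o(p)}$. This beats $3^{p}$ only when $|A|+|B|+|C|\gtrsim \frac{3-\log_2 3}{2}\,p\approx 0.707p$, which is why you are forced to a threshold like ``all sets $\ge p/4$''; the regime $\eps p\le \min(|A|,|B|,|C|)<p/4$ is then covered by neither of your two arguments (your Freiman argument is aimed at small sumset defect, which need not hold there). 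The paper avoids this by \emph{chaining} the applications: Proposition \ref{p:Semchankau_A+B} applied to $(A,B)$ yields $W_C$ with $W_C\neq A+_{\delta}B$; Lemma \ref{l:struct} upgrades this to $A\setminus A'\neq W_C-_{\cdot}B$ so that Proposition \ref{p:Semchankau_A+B_alt} can be applied with $W_C$ in place of $C$ to produce $W_A$, and once more to produce $W_B$ with $W_B\neq W_C-_{\cdot}W_A$. A single application of Pollard to this last relation gives $|W_A|+|W_B|+|W_C|\le p+O(\eps^{2}p)$ \emph{unconditionally} for all densities $\ge\eps$, hence a count of $2^{p+O(\eps p)}$ on the whole range $\min\ge\eps p$. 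Your version bounds each $|W_X|$ against the original sets instead of against the other $W$'s, and that is exactly where the factor is lost.

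The sparse case is the more serious gap: it is where the improvement from $4-c_*$ to $3-c_*$ actually happens, and your sketch does not close. The sum over the defect $k$ of $p^{O(1)}\binom{a+O(k)}{O(k)}\binom{b+O(k)}{O(k)}2^{p-a-b+1-k}$ can exceed $3^{p}$ when $a,b,k$ are all comparable constant fractions of $p$ (the binomials contribute roughly $2^{2CkH(\cdot)}$ against a gain of only $2^{-k}$), the $3k-4$ structure is vacuous once $k$ is comparable to $\min(a,b)$ (which happens throughout the range you need it for), and the rectification hypotheses are never verified; asserting that ``the resulting sum stays below $(3-c_*)^{p}$'' is precisely the point that needs proof. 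The paper's argument here is entirely different and elementary: if $2\le|B|\ll\eps p$, an affine change of variables places $0,-1\in B$, whence $A\cap\big(C\cup(C+1)\big)=\emptyset$ and the count of pairs is $\sum_{C}2^{p-|C\cup(C+1)|}$. Since $|C\cup(C+1)|=2|C|-|C\cap(C+1)|$ and Chernoff's inequality (Theorem \ref{t:chernoff}) shows that $|C\cap(C+1)|\approx\gamma^{2}p$ for all but exponentially few $C$ of density $\gamma$, this sum is $(3-c_*)^{p}$, and multiplying by the $\exp(O(\eps p))$ choices of small $B$ finishes. This observation --- two elements of $B$ force $A$ to avoid two shifted copies of $C$, which typically barely overlap --- is the key new idea of the theorem and is absent from your proposal.
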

\begin{proof}
    When one of the sets $A, B, C$ is empty we obtain the term $3 \cdot 4^{p}$, and when one of them contains just one element, we get the term $3p \cdot 3^{p}$. From now let us assume that all sets contain at least two elements. 
    We will consider two cases: when one of the sets is small, and when all $A, B, C$ satisfy $|A|, |B|, |C| \gg \eps p$, where $\eps$ is a sufficiently small constant.
    
    Let us assume that $2 \leqslant |B| \ll \eps p$. Without loss of generality, we can assume that $B$ contains elements $0$ and $-1$ (by multiplying and shifting $A, B, C$ by the same constants). Therefore, $A \neq C \cup (C+1)$. The number of pairs $A, C$ is at most
    \begin{equation}\label{eq:ac}
        \sum_{c = 1}^{p}\sum_{C:\ |C| = c}
    \#\{A : A \neq C \cup (C+1)\} = \ 
    \sum_{c = 1}^{p}\sum_{C:\ |C| = c}
    2^{p - |C \cup (C+1)|}  \,.
    \end{equation}
  
    If we write $2^{p - |C|}$ instead of $2^{p - |C\cup (C+1)|}$, then the above sums up to $3^p$. However, $|C \cup (C+1)| = |C| + |C+1| - |C \cap (C+1)| = 2|C| - |C \cap (C+1)|$, and we expect $|C \cap (C+1)|$ to be equal roughly $\gamma^2 p$ in average, for $|C| = \gamma p$. Using Chernoff bound \eqref{t:chernoff} we can estimate the number of cases when $|C \cap (C+1)| \approx \gamma^2 p$
    (the latter has exponentially small probability), and we obtain estimate $(3 - c_*)^p$  for sum \eqref{eq:ac}  in this case. 

    Summing it for all $B, 2 \leqslant \min(|A|, |B|, |C|) \ll \eps p$ we obtain
    $$
    \#\{A, B, C : A + B \neq C\} 
    \leqslant 
    \#\{B\}(3 - c_*)^p
    \leqslant
    \exp(O(\eps p))(3 - c_*)^p 
    \leqslant
    (3 - c_* + O(\eps))^p.
    $$

    \bigskip

    Now let us consider the case, when all $A, B, C$ satisfy $|A|, |B|, |C| \gg \eps p$.

	Let us apply Proposition \ref{p:Semchankau_A+B} to sets $A, B, C$ with the parameters $\d := \d_C$ and $\eps := \eps_C$ ($\eps_C$ and $\d_C$ will  be defined later). 
	We find sets $W_C$, $Y_C$, so that 
	$$C \setminus Y_C \subseteq W_C ,\ \ W_C \neq A +_{\d_C}B.
	$$
	
	Let us choose some $T > 1$. By Lemma \ref{l:struct} we have $A\setminus A' \neq W_C -_{\delta_C T/\eps}B$ for some $A'$ of size at most $|A|/T$. Let $\d_A := \delta_C T/\eps$. Put $A_1 := A \setminus A'$. Apply Proposition \ref{p:Semchankau_A+B_alt} to the sets $A_1, W_C, B$ with the parameters $\eps := \eps_A, \eta := \d := \d_A$ (again $\eps_A$ will be defined later). We see that for some $W_A, Y_A$ one has 
	$$A_1 \setminus Y_A \subseteq W_A,\ \ W_A \neq W_C -_{2\d_A}B.
	$$
	
	In a similar way by Lemma \ref{l:struct} we have $B \setminus B' \neq W_C -_{2\d_A T /\eps} W_A$ for some $B'$ of size at most $|B|/T$. Let us put $\d_B := 2\d_A T /\eps$. Let $B_1 := B \setminus B'$. By Proposition \ref{p:Semchankau_A+B_alt} applied to the sets $B_1, W_A$, and $W_C$ with parameters $\eps := \eps_B$ and $\eta := \d := \d_B$ ($\eps_B$ will be defined later) we obtain some $W_B$ and $Y_B$, so that 
	$$
	B_1 \setminus Y_B \subseteq W_B,\ \ W_B \neq W_C -_{2\d_B} W_A.
	$$
	
	We know that all $Y_X$ (with $X = A, B$ or $C$) satisfy the rough inequality $|Y_X| \leqslant (\eps_X/\d_X)^2 p$.  Let us put  each $\eps_X$ to be equal $\eps \d_X$. Now we have bounds $|Y_X| \leqslant \eps^2 p$ for $X = A, B, C$. It is easy to see that
	$$
	\#\{Y_X\} \ll \exp(O(p\eps)).
	$$
	
	Each $W_X$ is defined by a collection of at most $t:=\exp(O(\eps_X^{-2} \log \eps^{-1}\log(\eps\d_X)^{-1}))$ shifts of  Bohr sets $S_j :=  \mathcal{B}_X + s_j$, $s_j \in \F_p$. We now guarantee that all $\d_X$ (and therefore $\eps_X$) do satisfy $\d_X \gg \eps^{\Omega(1)}$, by setting $T := 1/\eps^2$ and 
	$\d_C = \eps^{10}$ (notice that $\d_A = \d_CT/\eps = \eps^7$, and $\d_B = 2\d_A T/\eps = 2\eps^4$). From this we obtain $t \ll \exp(O(\eps^{-\Omega(1)}))$.
	We know that the dimension of our  Bohr set $\mathcal{B}_X$ is at most $\dim(\mathcal{B}_X), \dim(\mathcal{B}_X) \ll \eps_X^{-2} \log \eps^{-1} \ll \eps^{-\Omega(1)}$.
	Hence the number of possible sets $S_j = \mathcal{B}_X + s_j$ is at most 
	$$
	\#\{\mathcal{B}_X\} \#\{s_1, \dots, s_t\} 
	\ll 
	\binom{p}{\dim(\mathcal{B}_X)}
	\binom{p}{t}
	\ll 
	p^{\dim(\mathcal{B}_X) + t} 
	\ll 
	\exp(O(\eps p)).
	$$
	From here we obtain 
	$$
	\#\{W_X\} 
	\ll 
	\#\{\mathcal{B}_X\} \#\{s_1, \ldots, s_t\} 
	\ll 
	\exp(O(\eps p))
	$$
	for all $X = A, B, C$.
 	
 	Clearly, since $|A_1| \leqslant |A|/T \ll \eps^2p, |B_1| \ll \eps^2 p$, it follows that
 	$$\#\{A_1\}, \#\{B_1\} \ll 
 	\binom{p}{\eps^2 p} \ll \exp(O(\eps p)) \,.$$
 	
	Finally, from the formula $W_B \neq W_C -_{2\d_B} W_A$ and estimate \eqref{f:Pollard} from Theorem \ref{t:Pollard} we obtain that 
	$$
	|W_A| + |W_B| + |W_C| \leqslant p + 2\sqrt{2\d_B}p = p + O(\eps^2p)
	$$
	For given sets $W_A, W_B, W_C$, satisfying $|W_A|+|W_B|+|W_C| \leqslant p + O(\eps^2 p)$, and given $Y_A, Y_B, Y_C$ the number of triples  $A, B, C$ is bounded by
    $$
    \#\{A'\} \#\{A_1 : A_1 \setminus Y_A \subseteq W_A \}\ 
    \#\{B'\} \#\{B_1 : B_1 \setminus Y_B \subseteq W_B\}\ 
    \#\{C : C \setminus Y_C \subseteq W_C\},
    $$
    which is at most 
    $$
    \exp(O(\eps p))
    2^{|W_A|}2^{|W_B|}2^{|W_C|} \ll
    2^{p + O(\eps p)}.
    $$
    The number of sets $W_A, W_B, W_C, Y_A, Y_B, Y_C$ 
    can be estimated as 
$$
\#\{W_A\}\#\{Y_A\}\ 
\#\{W_B\}\#\{Y_B\}\ 
\#\{W_C\}\#\{Y_C\}
\ll 
\exp(O(\eps p)),
$$
and the final result is $2^{p + O(\eps p)}$. This completes the proof.
$\hfill\Box$
\end{proof}	

   \begin{remark}
    It is highly likely, that when $2 \leqslant |B| \ll \eps p$, one can estimate $\#\{A + B \neq C\}$ as 
    $$
    \exp(O(\eps p))
    \max_{\gamma \in (0, 1)}
    \binom{p}{\gamma p}
    2^{p - 2\gamma p + \gamma^2 p} 
    = 
    \exp(O(\eps p))
    \max_{\gamma \in (0, 1)} 
    e^{(-\gamma\ln{\gamma} - (1 - \gamma)\ln{(1 - \gamma)} + (1 - \gamma)^2\ln{2})p},
    $$
    which is optimized for $\gamma = 0.2653\ldots$ and gives $O(2.5926\ldots + O(\eps))^p$.
    \end{remark}

    \begin{remark}
It is also highly likely, that 
consider separately the cases $\min{(|A|, |B|, |C|)}$ $= 2, 3, 4, \ldots, k$, we can obtain bound on $\#\{A, B, C : A + B \neq C\}$ of the form 
$$
3\cdot 4^p + 
3p\cdot 3^p + 
Q_2(p)\lambda_2^p + 
Q_3(p)\lambda_3^p + 
\ldots + 
Q_{k}(p)\lambda_{k}^p +
O((\lambda_{k} - c_*)^p),
$$
where $4 > 3 > \lambda_2 > \lambda_3 > \ldots > \lambda_{k} > c_* > 0$ are absolute constants, and $Q_i$ are some polynomials on $p$.
\end{remark}

\section{An improvement in the general case}

The case of an arbitrary finite abelian group $\Gr$ requires
more refined arguments and ge\-ne\-ra\-li\-za\-tions.
For example, we have  more general version of inequality \eqref{f:Pollard} (see \cite[Corollary 6.2]{GreenRusza})
which follows by the same argument 
 from the generalization of Pollard Theorem (see, e.g., \cite[Proposition 6.1]{GreenRusza} or \cite[Theorem 1]{HamidouneSerra}).

\begin{theorem}
	Let $\Gr$ be an abelian group, and $A,B\subseteq \Gr$ be sets. 
	Let $\eps \in (0, 1)$ be such that $\sqrt{\eps}|\Gr| < |A|, |B|$.
	Let $H$ be a maximal proper subgroup of $\Gr$. 
	Then
	\[
	|A+_{\eps}B| \ge \min(|\Gr|, |A| + |B| - |H|) - 3\sqrt{\eps}|\Gr| \,,
	\]
\label{t:general_Pollard}
\end{theorem}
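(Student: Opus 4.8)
This is \cite[Corollary 6.2]{GreenRusza}, and the plan is to recover it from the general abelian group \emph{layer--cake} form of Pollard's theorem (\cite[Proposition 6.1]{GreenRusza} or \cite[Theorem 1]{HamidouneSerra}), by exactly the argument that yields Theorem \ref{t:Pollard} from the classical Pollard inequality over $\F_p$. Write $r(x) = (A*B)(x)$ for the number of representations $x = a+b$, $a\in A$, $b\in B$, and let $H = \{ g\in\Gr : g + (A+B) = A+B\}$ be the group of periods of $A+B$. If $A+B = \Gr$ the assertion is immediate (take $H = \Gr$), so assume $A+B\neq\Gr$; then $H$ is proper, and we enlarge it to a maximal proper subgroup, the sole effect being a possible increase of $|H|$, which can only weaken the conclusion. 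Weakening the cited results by this enlargement, one has, for every integer $t$ with $1\le t\le\min(|A|,|B|)$,
\[
	\sum_{x\in\Gr} \min\big( r(x),\, t \big)\ \ge\ t\cdot\min\big( |\Gr|,\ |A|+|B|-|H|-t+1 \big)\,;
\]
for $\Gr=\F_p$, $H=\{0\}$ this is precisely the Pollard inequality underlying Theorem \ref{t:Pollard}, and for $t=1$ it is Kneser's Theorem \ref{t:Kneser}. This is the one step with genuine content; we quote rather than reprove it.

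Granting this, I rewrite the left--hand side through level sets, $\sum_x\min(r(x),t) = \sum_{s=1}^{t}|\{x : r(x)\ge s\}|$, and truncate at the threshold $\tau := \lceil\eps|\Gr|\rceil$. Since $s\mapsto|\{x:r(x)\ge s\}|$ is non--increasing, is at most $|\Gr|$, and satisfies $\{x:r(x)\ge\tau\}\subseteq A+_\eps B$, one gets for any integer $t\ge\tau$ that
\[
	\sum_{s=1}^{t}|\{x:r(x)\ge s\}|\ \le\ (\tau-1)|\Gr|\ +\ (t-\tau+1)\,|A+_\eps B|\,.
\]
Now choose $t := \lceil\sqrt{\eps}\,|\Gr|\rceil$: the hypothesis $\sqrt{\eps}|\Gr|<|A|,|B|$ makes this admissible ($t\le\min(|A|,|B|)$), and $t\ge\tau$ because $\sqrt{\eps}\ge\eps$. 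Comparing the two displays: if the minimum on the right of the imported inequality is $|\Gr|$, then all $t$ level sets have size $|\Gr|$, whence $A+_\eps B = \Gr$ and we are done; otherwise, solving the resulting linear inequality for $|A+_\eps B|$ and substituting $\tau-1\le\eps|\Gr|$, $t\le\sqrt{\eps}|\Gr|+1$ and $t-\tau+1\ge(\sqrt{\eps}-\eps)|\Gr|$ gives, after an elementary simplification (the $\lceil\cdot\rceil$--rounding contributing only lower--order terms),
\[
	|A+_\eps B|\ \ge\ |A|+|B|-|H| - \frac{2\sqrt{\eps}}{1-\sqrt{\eps}}\,|\Gr|\,.
\]
Since $\frac{2\sqrt{\eps}}{1-\sqrt{\eps}}\le 3\sqrt{\eps}$ for $\eps\le 1/9$, while for $\eps>1/9$ the target inequality is vacuous ($3\sqrt{\eps}|\Gr|>|\Gr|\ge|A+_\eps B|$), this yields $|A+_\eps B|\ge\min(|\Gr|, |A|+|B|-|H|) - 3\sqrt{\eps}|\Gr|$. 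The borderline sub--case in which $|A|+|B|-|H|$ meets or exceeds $|\Gr|$ is either trivial or handled by the same computation with $|\Gr|$ in place of $|A|+|B|-|H|$.

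The only genuine obstacle is the imported step. Over $\F_p$ the level sets $\{x : r(x)\ge s\}$ arising inside Pollard's inductive argument are controlled by Cauchy--Davenport; in a general abelian group the periods of the successive nested sumsets produced by that argument need not lie in one common subgroup, and reconciling them is precisely the content of \cite[Proposition 6.1]{GreenRusza} / \cite[Theorem 1]{HamidouneSerra} (a Kneser--type sharpening of Pollard). Everything downstream is a routine truncation, in which the choice $t\asymp\sqrt{\eps}|\Gr|$ is forced: $t$ must stay below $\min(|A|,|B|)$, yet the denominator $t-\tau+1$ must remain of order $\sqrt{\eps}|\Gr|$ for the error term to be $O(\sqrt{\eps}|\Gr|)$ rather than $O(|\Gr|)$ — which is exactly why the hypothesis $\sqrt{\eps}|\Gr|<|A|,|B|$ is the natural one.
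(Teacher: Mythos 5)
Your proposal is correct and matches the paper's treatment: the paper offers no proof of this statement, simply citing \cite[Corollary 6.2]{GreenRusza} and noting it ``follows by the same argument'' from the layer--cake Pollard generalization of \cite[Proposition 6.1]{GreenRusza}/\cite[Theorem 1]{HamidouneSerra}, which is exactly the deduction you reconstruct (truncating $\sum_{s\le t}|\{x: r(x)\ge s\}|$ at $\tau=\lceil\eps|\Gr|\rceil$ with $t\asymp\sqrt{\eps}|\Gr|$). Your arithmetic checks out up to $O(1)$ rounding losses near the boundary $\eps=1/9$, which is immaterial here since the theorem is only invoked in Remark \ref{r:wa_wb_wc} with $O(\eps)$ slack to spare.
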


\begin{remark}\label{r:wa_wb_wc}
We will use this Theorem just to conclude the following observation. Let $W_a, W_b, W_c$ be some subsets of $\Gr$ of size at least $\eps p$. Let us also assume they satisfy $W_a \neq W_b +_{\d} W_c$ for some $\delta \ll \eps^2$. Then we have $|W_a| \leqslant |\Gr| - |W_b +_{\d}W_c|$.
Thanks to Theorem \ref{t:general_Pollard}
the last quantity  is at most $|\Gr| + |H| + 3\sqrt{\d}|\Gr| - |W_b| - |W_c|$ 
and this implies 
$$
|W_a| + |W_b| + |W_c| \leqslant |\Gr| + |H| + 3\sqrt{\d}|\Gr| \leqslant (3/2 + O(\eps))|\Gr|.
$$
\end{remark}


Now we are ready to prove an analogue of Theorem \ref{t:A+B=C} and we will appeal to the proof of this result.

\begin{theorem}
	Let $\Gr$ be a finite abelian group, $N=|\Gr|$.
	Then the number of sets $A,B,C$ from $\Gr$ such that  $x+y\neq z$, $x\in A$, $y\in B$, $z\in C$
	equals
	\begin{equation}
	3 \cdot 4^{N} + 3N\cdot 3^{N} + O((3 - c_*)^N)  \,,
	\end{equation}
	where $c_*>0$ is an absolute constant.
\end{theorem}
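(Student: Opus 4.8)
The plan is to mimic the proof of Theorem \ref{t:A+B=C_new} from the prime case, replacing the Pollard bound \eqref{f:Pollard} by its group-theoretic analogue Theorem \ref{t:general_Pollard}, and replacing Chang/Bohr-set covering arguments by the structural statement of Proposition \ref{p:Semchankau_A+B} (which, as remarked in the excerpt, is valid for general finite abelian groups). As before, the terms $3\cdot 4^N$ and $3N\cdot 3^N$ come from triples with an empty set among $A,B,C$ and from triples with a one-element set; so we may assume $\min(|A|,|B|,|C|)\ge 2$. We again split into two cases: the case $2\le |B|\ll\eps N$ (or the same for $A$ or $C$), and the case $|A|,|B|,|C|\gg\eps N$.

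For the small case, say $2\le|B|\ll\eps N$, I would first reduce to $B\supseteq\{0,x_0\}$ for some $x_0\ne 0$ by translation (multiplication is not available in a general group, but a single shift suffices), so that $A\neq C\cup(C+x_0)$. The number of such pairs $(A,C)$ is $\sum_{C}2^{N-|C\cup(C+x_0)|}$, and since $|C\cup(C+x_0)|=2|C|-|C\cap(C+x_0)|$, I would use Chernoff's inequality (Theorem \ref{t:chernoff}) exactly as in the prime case: for a random $C$ of density $\gamma$, $|C\cap(C+x_0)|$ concentrates around $\gamma^2 N$ — here one must be slightly careful because if $x_0$ has small order $m$ the coordinates split into cycles, but each cycle still contributes the right variance, so the bound $(3-c_*)^N$ holds. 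Summing over the $\exp(O(\eps N))$ choices of $B$ gives $(3-c_*+O(\eps))^N$.

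For the main case $|A|,|B|,|C|\gg\eps N$, I would run the bootstrapping loop of Theorem \ref{t:A+B=C_new} verbatim: apply Proposition \ref{p:Semchankau_A+B} (general-group version) to get $C\setminus Y_C\subseteq W_C$ with $W_C\neq A+_{\d_C}B$, a union of $t\ll\exp(\eps^{-\Omega(1)})$ shifts of a low-dimensional Bohr set; then Lemma \ref{l:struct} (whose proof is purely combinatorial and works in any abelian group) to pass to $A_1=A\setminus A'$ with $A_1\neq W_C-_{\d_A}B$; then Proposition \ref{p:Semchankau_A+B_alt} again to get $W_A$, and once more to get $W_B$, with the same choices $T=1/\eps^2$, $\d_C=\eps^{10}$, $\eps_X=\eps\d_X$, so that all dimensions and shift-counts are $\eps^{-\Omega(1)}$ and hence the number of possible data $(W_A,W_B,W_C,Y_A,Y_B,Y_C,A_1,B_1)$ is $\exp(O(\eps N))$. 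The only genuine change is the final sumset inequality: instead of $|W_A|+|W_B|+|W_C|\le N+O(\eps^2 N)$ we only get, via Remark \ref{r:wa_wb_wc} (Theorem \ref{t:general_Pollard} applied to $W_B+_{\d_B}W_A$ with the maximal proper subgroup $H$), that $|W_A|+|W_B|+|W_C|\le N+|H|+O(\eps^2 N)$. If $|H|\le c_1 N$ this is already $\le(1+c_1+O(\eps))N$ and, choosing $c_1$ small, the count $2^{|W_A|+|W_B|+|W_C|}\exp(O(\eps N))\le 2^{(1+c_1)N}$ beats $(3-c_*)^N$; then one takes $3-c_*:=\max(2^{1+c_1},3-c_1,\dots)$ appropriately.

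The main obstacle, therefore, is exactly the case where $\Gr$ has a proper subgroup $H$ of index $n=O(1)$, so $|H|$ is a constant fraction of $N$ and the naive $2^{N+|H|}$ bound exceeds $3^N$. Here I would copy the Kneser-type descent from the proof of Theorem \ref{t:A+B=C,N}: let $H$ be the maximal proper subgroup realizing the defect, let $k_A,k_B,k_C$ be the numbers of cosets of $H$ met by $A,B,C$; Theorem \ref{t:Kneser} forces $k_A+k_B+k_C\le n+1$ (roughly), so there are only $O(1)$ coset patterns, and $N^{O(1)}$ choices of $H$. Having fixed the pattern, within each ``full'' coset $A,B$ or $C$ is arbitrary ($2^{|H|}$ choices each), and within each coset that is forced to be \emph{missed} it contributes nothing; the key point is that a coset meeting $C$ but with $A+B$ already covering it forces no constraint, whereas a coset of $C$ not covered by $A+B$ forces a structural restriction on the relevant fibres of $A,B$ inside their cosets — and here one recurses, applying the whole argument inside the subgroup $H$ (whose order is $N/n$, still large) to the fibre sets. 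Since $n=O(1)$, this recursion terminates after $O(1)$ steps, losing only an $\exp(O(\eps N))$-type factor at each level, and one checks that the exponential growth rate stays below $3-c_*$. Packaging this recursion cleanly — in particular verifying that the one-element-coset and empty-coset contributions reassemble into precisely the $3\cdot4^N+3N\cdot3^N$ main terms and do not overcount — is the part that will require the most care.
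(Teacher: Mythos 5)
Your overall plan --- rerun the proof of Theorem \ref{t:A+B=C_new} with Theorem \ref{t:general_Pollard} in place of Theorem \ref{t:Pollard} --- is exactly the paper's, and your treatment of the small-set case (shift so that $0,x_0\in B$, then control $|C\cap(C+x_0)|$ by Chernoff along the orbits of translation by $x_0$) is correct and in fact more explicit than the paper, which simply refers back to the prime-field argument.

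The problem is the last third of your argument. You claim that when the maximal proper subgroup $H$ has $|H|$ a constant fraction of $N$ ``the naive $2^{N+|H|}$ bound exceeds $3^N$'', and you then sketch a Kneser-type recursion over coset patterns to rescue this case, conceding that packaging it cleanly is the hard part. But no such case exists: since $H$ is a \emph{proper} subgroup its index is at least $2$, so $|H|\le N/2$ always, and Remark \ref{r:wa_wb_wc} gives $|W_A|+|W_B|+|W_C|\le N+|H|+O(\eps^2 N)\le (3/2+O(\eps))N$ unconditionally. Hence the large-sets case contributes at most $2^{(3/2+O(\eps))N}=(2\sqrt{2}+O(\eps))^N\approx 2.83^N$, which is already $O((3-c_*)^N)$; no descent into $H$, no coset patterns, no recursion. (The paper's closing remark shows that $2^{3N/2}$ is genuinely attained when $\Gr$ has an index-$2$ subgroup, so your instinct that such subgroups give the extremal configuration is right --- but their contribution still sits safely below $3^N$.) As written, your proof is incomplete precisely because the recursion you defer to is never carried out; the fix is to delete it and insert the one-line observation that $|H|\le N/2$ and $2^{3/2}<3$.
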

\begin{proof}
	As in the proof of Theorem \ref{t:A+B=C_new} we split the problem into two cases, namely when one of the sets $A,B,C$ is small, and when all the sets have sizes at least $\eps N$ for a certain  $\eps \in (0, 1)$.
	As in the proof of Theorem \ref{t:A+B=C_new} we see that 
	the first case gives us 
	at most $3 \cdot 4^{N} + 3N\cdot 3^{N} + O((3 - c_*)^N)$ of such triples.
	
	
	In the second case we repeat the same argumentation as for $\Gr = \F_p$ but we need to change it slightly, since we cannot use the inequality from Theorem \ref{t:Pollard} in the general case. Replacing usage of Theorem \ref{t:Pollard} with the more general Theorem \ref{t:general_Pollard} and taking into account Remark \ref{r:wa_wb_wc} we obtain for the sets $W_a, W_b, W_c$ that $|W_a| + |W_b| + |W_c| \leqslant (1.5 + O(\eps))N$, which changes the bound in the second case from $2^{N + O(\eps)N}$ to $2^{1.5N + O(\eps)N} = (2\sqrt{2} + O(\eps))^{N}$, and the proof is complete.

$\hfill\Box$
\end{proof}

\begin{remark}
Let us take a finite abelian group $\Gr$, $N = |\Gr|$ which has a subgroup $H$ of index $2$. Then for any $A, B \subseteq H, C \subseteq \Gr \setminus H$ we have $A + B \neq C$ and $\#\{A, B, C : A + B \neq C\} \gg 2^{|H|}2^{|H|}2^{|H|} = 2^{3N/2} = (2\sqrt{2})^N$. Therefore, one cannot obtain a smaller exponent 
at least in 
the second case of the proof.
\end{remark}

\renewcommand{\refname}{References}

\end{document}